\documentclass[fleqn,preprint,3p,a4paper]{elsarticle}

\usepackage{amsmath,amssymb,amsthm,mathtools}
\usepackage[T1]{fontenc}
\usepackage{lmodern,microtype}
\usepackage{booktabs,mathrsfs,enumerate}
\usepackage[hidelinks]{hyperref}

\newtheorem{theorem}{Theorem}[section]
\newtheorem{lemma}[theorem]{Lemma}
\newtheorem{proposition}[theorem]{Proposition}
\newtheorem{corollary}[theorem]{Corollary}
\theoremstyle{definition}
\newtheorem{remark}[theorem]{Remark}

\newcommand{\N}{\mathbb N}
\newcommand{\HH}{\mathbb H}
\newcommand{\RR}{\mathcal R}
\newcommand{\rows}{\operatorname{rows}}

\begin{document}
\begin{frontmatter}

\title{Scott products and sobriety of countable meet-continuous dcpos\tnoteref{t1}}
\tnotetext[t1]{This research was supported by the National Natural Science Foundation of China  (Nos. 12471070, 12071199, 12561083).}

\author[1]{Xiaoquan Xu\corref{cor1}}
\cortext[cor1]{Corresponding author}
\ead{xiqxu2002@163.com}
\author[2]{Wei Ji}
\ead{javeey@163.com}

\address[1]{School of Computer Information Engineering, Nanchang Institute of Technology, Nanchang 330044, China}
\address[2]{School of Mathematics and Statistics, Guilin University of Technology, Guilin 541004, China}

\begin{abstract}
We study Scott-product compatibility and sobriety for countable meet-continuous directed-complete partial orders (dcpos). We construct a countable meet-continuous dcpo $\RR$ whose Scott space is well-filtered but not sober, although the Scott topology on every finite power is the product of the factor Scott topologies. The ideals of $\RR$ admit a unique finite-parameter description. We also construct a countable meet-continuous dcpo $\HH$ with a greatest element whose Scott space is coherent and well-filtered, but whose Scott square has a topology strictly finer than the ordinary product topology. These constructions answer negatively two questions about countable meet-continuous dcpos. On the positive side, we prove that a meet-continuous $L$-dcpo, meaning a dcpo whose principal ideals are complete lattices, is Scott sober whenever its Scott square carries the ordinary product topology. This answers Jia's core-compactness question affirmatively for $L$-dcpos without any countability assumption. Finally, we characterize well-filteredness of $L$-dcpos by boundedness of closed Rudin sets and formulate related open questions.
\end{abstract}

\begin{keyword}
Scott topology, meet-continuous dcpo, sobriety, product topology, well-filteredness, $L$-dcpo, core-compactness
\MSC[2020] 06B35 \sep 06F30 \sep 54B10 \sep 54D10 \sep 54D45
\end{keyword}

\end{frontmatter}

\section{Introduction}\label{sec:intro}
The Scott topology connects directed order convergence with topological continuity. Two basic questions ask whether the Scott topology on an order product agrees with the product of the factor Scott topologies, and whether every irreducible Scott closed set has a generic point. Xu and Ji proved both conclusions for countable meet-continuous complete lattices~\cite[Theorem~1.1]{XuJi}. Xu then asked whether they remain valid for arbitrary countable meet-continuous dcpos, without any least-element or semilattice assumption~\cite[Questions~1.1 and~1.2]{Xu}.

We answer both questions negatively. Section~\ref{sec:fan} constructs a countable meet-continuous row-tail dcpo $\RR$ whose Scott space is well-filtered but neither coherent nor sober, although every finite Scott power carries the ordinary product topology. Its ideals have a unique finite-parameter normal form. Section~\ref{sec:word} constructs a countable meet-continuous finite-word dcpo $\HH$ with a greatest element such that
\[
 \sigma(\HH)\times\sigma(\HH)\subsetneq\sigma(\HH\times\HH).
\]
Its Scott space is coherent and well-filtered. Thus finite-power product equalities do not imply sobriety, and coherence together with well-filteredness does not ensure the square product equality, even in the countable meet-continuous class.

The constructions exhibit different obstructions. For $\HH$, a common lower bound relation is Scott open in the order square but contains no ordinary open rectangle at the greatest pair. For $\RR$, a common upper bound section fails to be closed under directed suprema. These relations also guide the positive results.

Section~\ref{sec:local} proves that a meet-continuous $L$-dcpo---a dcpo whose principal ideals are complete lattices---is Scott sober whenever its square satisfies the Scott product equality (Theorem~\ref{local:sober}). Combined with Xu's local product theorem~\cite[Theorem~3.4]{Xu}, this yields sobriety for countable meet-continuous dcpos with local binary joins. The cardinality-free theorem also answers Jia's core-compactness question~\cite[Question~4.5.8, p.~139]{Jia} affirmatively for $L$-dcpos: core-compactness supplies the square equality by~\cite[Theorem~II-4.13]{GHK}. Corollaries~\ref{local:corecompact} and~\ref{local:functionspace} treat the question and its function-space version.

The row-tail pattern is motivated by Gruenhage and Streicher's pseudobase construction~\cite[pp.~225--226]{GS}. Its ideal normal form permits application of the product criterion of Miao, Xi, Li and Zhao~\cite[Lemma~4.1, pp.~827--829]{MXLZ}. A separate compactness argument proves well-filteredness: every compact subset lies in an algebraic finite-row truncation carrying the subspace Scott topology. For $\HH$, finite generation of common upper bound sets gives the strong $d$-space property and hence well-filteredness~\cite[Corollary~64]{LXLower}.

Section~\ref{sec:boundaries} compares the examples and formulates open questions. Proposition~\ref{local:rudin-boundedness} reduces well-filteredness of $L$-dcpos to boundedness of closed Rudin sets. The sobriety of $\Sigma\HH$ and the implication from sobriety to the square product equality remain open in the countable meet-continuous class.

\section{Preliminaries and common-bound relations}\label{sec:relations}
All posets and directed sets are nonempty, and countable sets may be finite. For $B\subseteq P$, write
\[
\begin{aligned}
 {\downarrow} B&=\{x\in P:x\leq b\text{ for some }b\in B\},\\
 {\uparrow} B&=\{x\in P:b\leq x\text{ for some }b\in B\}.
\end{aligned}
\]
We abbreviate singleton arguments. An \emph{ideal} is a directed lower set; ideals are not required to be Scott closed. A \emph{dcpo} is a poset in which every directed set has a supremum. A subset $U$ of a poset $P$ is said to be \emph{Scott open} if (i) $U=\mathord{\uparrow}U$, and (ii) for any directed subset $D$ whose supremum exists in $P$, $\sup D\in U$ implies $D\cap U\neq\emptyset$.  All Scott open subsets of $P$ form a topology, called the \emph{Scott topology} on $P$ and denoted by $\sigma(P)$. The space $\Sigma P=(P,\sigma(P))$ is called the \emph{Scott space} of $P$. Scott closed sets are precisely the lower sets closed under existing directed suprema. A nonempty Scott closed set $A$ is \emph{irreducible} if any two open sets meeting $A$ have an intersection meeting $A$. The space $\Sigma P$ is \emph{sober} if every such $A$ is ${\downarrow} a$ for a unique $a\in P$.

Following Kou, Liu and Luo~\cite{KLL}, a dcpo $P$ is called \emph{meet-continuous} if, for every $x\in P$ and every directed subset $D\subseteq P$,
\begin{equation}\label{def:mc}
 x\leq\sup D
 ~\Longrightarrow~
 x\in\operatorname{cl}_{\sigma(P)}
       ({\downarrow} x\cap{\downarrow}D).
\end{equation}
No binary meet, binary join, least element or way-below continuity is implicit. We call
\[
 \sigma(P\times Q)=\sigma(P)\times\sigma(Q)
\]
the \emph{Scott product equality} for $P$ and $Q$; the topology on the right is the ordinary product topology. The inclusion $\sigma(P)\times\sigma(Q)\subseteq\sigma(P\times Q)$ always holds because the coordinate projections preserve directed suprema.

Let $P$ be a dcpo. For $U\in\sigma(P)$, define the common lower bound relation
\[
 \mathrm{LB}_U
 =\{(x,y)\in P^2:\text{some }z\in U\text{ satisfies }z\leq x,y\}.
\]
Condition~\eqref{def:mc} is equivalent to Scott openness of $\mathrm{LB}_U$ for every Scott open $U$. For the forward implication, apply meet-continuity successively to the two coordinate projections of a directed subset of $P^2$ and then take a common upper member. Conversely, apply Scott openness to the directed set $\{(x,d):d\in D\}$ when $x\leq\sup D$, taking arbitrary Scott open neighbourhoods $U$ of $x$. We use this elementary reformulation in the finite-word construction.

For a nonempty irreducible Scott closed set $A$, put
\[
 \mathrm{UB}_A
   =\{(x,y)\in P^2:\text{some }a\in A\text{ satisfies }x,y\leq a\}.
\]
For a dcpo $P$ satisfying the square Scott product equality, Xu proved that $\Sigma P$ is sober if and only if ${\downarrow}({\uparrow}x\cap A)$ is Scott closed for every $x\in P$ and every nonempty irreducible Scott closed subset $A$ of $P$~\cite[Proposition~5.2]{Xu}. In the sufficiency proof, these sections make $\mathrm{UB}_A$ Scott closed in the order square, hence closed in the ordinary product. Irreducibility makes the diagonal of $A$ dense in $A\times A$. Section~\ref{sec:local} applies this common-upper-bound method using local joins.

In a lattice, $\mathrm{LB}_U$ is the inverse image of $U$ under binary meet, and $\mathrm{UB}_A$ is the inverse image of $A$ under binary join. Outside lattices the two relations need not have parallel behaviour.

For a $T_0$ space $X$, the specialization order is defined by
\[
 x\leq_X y\quad\Longleftrightarrow\quad x\in\operatorname{cl}_X\{y\}.
\]
Write $\mathcal Q(X)$ for the collection of its nonempty compact saturated subsets. Saturated subsets of a Scott space are precisely upper sets. A family $\mathcal K\subseteq\mathcal Q(X)$ is \emph{filtered} if it is nonempty and, for $K_1,K_2\in\mathcal K$, some $K_3\in\mathcal K$ satisfies $K_3\subseteq K_1\cap K_2$. The space $X$ is \emph{well-filtered} if, for every filtered family $\mathcal K\subseteq\mathcal Q(X)$,
\[
 \bigcap\mathcal K\subseteq U,~ U\text{ open}
 ~\Longrightarrow~
 K\subseteq U\text{ for some }K\in\mathcal K.
\]
It is \emph{coherent} if the intersection of two compact saturated subsets is compact. Every sober space is well-filtered~\cite[Theorem~II-1.21]{GHK}.

A nonempty closed subset $A$ of $X$ is a \emph{closed Rudin set} if, for some filtered $\mathcal K\subseteq\mathcal Q(X)$, it is an inclusion-minimal closed set meeting every member of $\mathcal K$~\cite{XZ}. These sets are also called closed $KF$-sets in~\cite{MLZ}. We recall the following standard characterization, including its short proof for the localization argument in Section~\ref{sec:local}.

\begin{lemma}[Rudin characterization, cf.~\cite{XZ}]\label{wf:rudin-characterization}
A $T_0$ space is well-filtered if and only if every closed Rudin set is the closure of a point. For a Scott space, this means that every closed Rudin set is a principal ideal.
\end{lemma}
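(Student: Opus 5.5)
The two directions are handled separately, and the main tool in both is the Rudin/Zorn argument: for a filtered $\mathcal K\subseteq\mathcal Q(X)$ and a closed set $C$ meeting every member of $\mathcal K$, there is a minimal closed subset of $C$ meeting every member of $\mathcal K$. To prove this, apply Zorn's lemma to the family of closed subsets of $C$ meeting all $K\in\mathcal K$, ordered by reverse inclusion. Let $\{C_i\}$ be a chain in this family. Its intersection still meets every $K\in\mathcal K$: for fixed $K$ the sets $C_i\cap K$ are closed in the compact set $K$, nonempty, and form a chain, so they have the finite intersection property and hence nonempty intersection.

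\emph{Sufficiency.} Assume every closed Rudin set is the closure of a point. Let $\mathcal K$ be filtered with $\bigcap\mathcal K\subseteq U$ for an open $U$, and suppose for contradiction that $K\not\subseteq U$ for every $K\in\mathcal K$. Then $C=X\setminus U$ is closed and meets every $K$. By the Zorn step, $C$ contains a closed Rudin set $A$, and by hypothesis $A=\operatorname{cl}\{x\}$ for some $x\in C$. Each $K$ meets $\operatorname{cl}\{x\}$, so it contains some $y\le_X x$. Since $K$ is saturated, it is an upper set in the specialization order, so $x\in K$. Hence $x\in\bigcap\mathcal K\subseteq U$, contradicting $x\in C$.

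\emph{Necessity.} Assume $X$ is well-filtered and let $A$ be a minimal closed set meeting every member of a filtered $\mathcal K$. First I show that $\mathcal K_A=\{{\uparrow}(K\cap A):K\in\mathcal K\}$ is a filtered family of compact saturated sets. Compactness holds because $K\cap A$ is closed in the compact $K$, and saturation of an upward closure in the specialization order is standard. Filteredness is inherited from $\mathcal K$. Nonemptiness of each member is exactly the defining property of $A$.

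Next I show that $A$ meets $\bigcap\mathcal K_A$. Otherwise $\bigcap\mathcal K_A\subseteq X\setminus A$, which is open, so well-filteredness gives some $K$ with ${\uparrow}(K\cap A)\cap A=\emptyset$. This is absurd, since $K\cap A\neq\emptyset$.

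Pick $x\in A\cap\bigcap\mathcal K_A$. For each $K$ we have $x\in{\uparrow}(K\cap A)$, so some $a\in K\cap A$ satisfies $a\le x$. Then $a\in\operatorname{cl}\{x\}$, so $\operatorname{cl}\{x\}$ meets every $K$. Since $x\in A$ and $A$ is closed, $\operatorname{cl}\{x\}\subseteq A$, and minimality forces $A=\operatorname{cl}\{x\}$. This step, choosing the refined family $\mathcal K_A$ and verifying the needed compactness, is the only real obstacle.

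\emph{Scott spaces.} In $\Sigma P$ the specialization order is the order of $P$. Hence $\operatorname{cl}\{x\}={\downarrow}x$, and closures of points are exactly the principal ideals.
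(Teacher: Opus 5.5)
Your proof is correct and follows the paper's route: Zorn's lemma produces minimal closed sets, saturation gives sufficiency, and well-filteredness applied to $X\setminus A$ gives necessity. The refined family $\mathcal K_A$ is an unnecessary detour, not the obstacle you describe: since $A$ meets every $K\in\mathcal K$, the inclusion $\bigcap\mathcal K\subseteq X\setminus A$ already contradicts well-filteredness, so you can pick $x\in A\cap\bigcap\mathcal K$ directly. Indeed, your own $x$ already lies there, because ${\uparrow}(K\cap A)\subseteq K$ by saturation.
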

\begin{proof}
First note that if a closed set $C$ meets every member of a filtered family $\mathcal K\subseteq\mathcal Q(X)$, then $C$ contains a closed Rudin set for that family. Indeed, the intersection of a nonempty inclusion chain of closed subsets of $C$ meeting every $K\in\mathcal K$ still meets every $K$, by compactness of $K$. Zorn's lemma supplies a minimal such closed set. This is the filtered-family case of the topological Rudin lemma~\cite{XZ}.

Suppose $X$ is well-filtered and $A$ is a closed Rudin set witnessed by $\mathcal K$. Then $A\cap\bigcap\mathcal K\ne\emptyset$, since otherwise well-filteredness applied to $X\setminus A$ would contradict the witnessing property. For $a$ in this intersection, the closed set $\operatorname{cl}_X\{a\}\subseteq A$ meets every $K$. Minimality gives $A=\operatorname{cl}_X\{a\}$.

Conversely, suppose every closed Rudin set is a point closure. If $\bigcap\mathcal K\subseteq U$ but no $K\in\mathcal K$ is contained in the open set $U$, the first paragraph gives a closed Rudin set $A\subseteq X\setminus U$ meeting every $K$. Write $A=\operatorname{cl}_X\{a\}$. If $b\in K\cap A$, then $b\leq_X a$ in the specialization order, so saturation gives $a\in K$. Thus $a\in\bigcap\mathcal K\subseteq U$, a contradiction.
\end{proof}

\section{A non-sober dcpo with countably many ideals}\label{sec:fan}
\subsection{Construction and directed completeness}
Write $\N=\{0,1,2,\ldots\}$ and
\[
 R_i(k)=\{(i,j):j\geq k\}\subseteq\N^2.
\]
For $p=(n,m)$, a finite set $J\subseteq\{n+1,n+2,\ldots\}$, and a threshold vector $\mathbf k=(k_i)_{i\in J}\in\N^J$, put
\begin{equation}\label{fan:fan}
 T(p;J,\mathbf k)=\{p\}\cup\bigcup_{i\in J}R_i(k_i).
\end{equation}
Let $\RR$ be the collection of all sets of the form \eqref{fan:fan}, ordered by reverse inclusion:
\[
 S\leq T~\Longleftrightarrow~ T\subseteq S.
\]
The set $J$ may be empty, but $\emptyset\notin\RR$. Every $S\in\RR$ has exactly one point in its lowest occupied row. Denote this \emph{anchor} by $a(S)$, its row by $h(S)$, and the finite set of occupied rows by $\rows(S)$. These data are unique, so $\RR$ is countably infinite.

\begin{lemma}\label{fan:anchor}
Every directed family $\mathcal D\subseteq\RR$ has a cofinal directed subfamily whose members have the same anchor and are all contained in one fixed member of $\mathcal D$.
\end{lemma}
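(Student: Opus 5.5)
The plan is to shrink $\mathcal D$ twice by the same device, first to force containment in one member and then to fix the anchor. Everything rests on one observation: for a fixed $S\in\mathcal D$, the subfamily $\mathcal D_S=\{T\in\mathcal D:T\subseteq S\}$ is cofinal and directed. For cofinality, given $T\in\mathcal D$, directedness yields $U\in\mathcal D$ with $T,S\leq U$, that is, $U\subseteq T\cap S$. Then $U\in\mathcal D_S$ and $T\leq U$. For directedness, two members $T_1,T_2$ of $\mathcal D_S$ have an upper bound $U\in\mathcal D$ with $U\subseteq T_1\cap T_2\subseteq S$, so $U\in\mathcal D_S$.

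First I fix an arbitrary $S_0\in\mathcal D$ and pass to $\mathcal D_0=\mathcal D_{S_0}$. Every $T\in\mathcal D_0$ satisfies $\rows(T)\subseteq\rows(S_0)$, which is a finite set. Hence the heights $h(T)$ for $T\in\mathcal D_0$ take only finitely many values. Moreover, the height is monotone: if $T\leq T'$, then $T'\subseteq T$ gives $\rows(T')\subseteq\rows(T)$, so $h(T')\geq h(T)$.

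Next I choose $T_1\in\mathcal D_0$ of maximal height $h^*$ and pass to $\mathcal D_1=(\mathcal D_0)_{T_1}=\{U\in\mathcal D:U\subseteq T_1\}$. By the observation above, $\mathcal D_1$ is cofinal in $\mathcal D_0$, hence in $\mathcal D$, and it is directed. Its members are all contained in the fixed member $T_1$ of $\mathcal D$. For $U\in\mathcal D_1$, monotonicity gives $h(U)\geq h^*$, and maximality gives $h(U)\leq h^*$, so $h(U)=h^*$.

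The last step, which is the only point using the specific shape \eqref{fan:fan}, identifies the anchors. The row $h^*$ is the lowest occupied row of $T_1$, so $T_1$ meets row $h^*$ only in its anchor $a(T_1)$. Each $U\in\mathcal D_1$ has its anchor in row $h^*$ and satisfies $U\subseteq T_1$, so $a(U)=a(T_1)$. Thus $\mathcal D_1$ is the required cofinal directed subfamily with a common anchor, contained in $T_1\in\mathcal D$. I do not expect a genuine obstacle. The one thing to get right is the direction of the order: reverse inclusion makes heights increase upward, which is what makes the finite bound from $\rows(S_0)$ usable.
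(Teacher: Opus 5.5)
Your proposal is correct and follows essentially the same route as the paper: restrict to the members contained in some $S_0\in\mathcal D$, choose a member $T_1$ whose anchor row is maximal among these finitely many possible rows, restrict again to the members contained in $T_1$, and read off the common anchor from the single point of $T_1$ in row $h^*$. The only difference is that you explicitly verify that $\{T\in\mathcal D:T\subseteq S\}$ is cofinal and directed, which the paper simply asserts.
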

\begin{proof}
Choose $S_0\in\mathcal D$. The family
\[
 \mathcal D^0=\{S\in\mathcal D:S\subseteq S_0\}
\]
is directed and cofinal. The possible anchor rows of its members belong to the finite set $\rows(S_0)$. Choose $S_1\in\mathcal D^0$ with maximal anchor row, and put
\[
 \mathcal D'=\{S\in\mathcal D:S\subseteq S_1\}.
\]
This family is again directed and cofinal. For $S\in\mathcal D'$, inclusion gives $h(S)\geq h(S_1)$, while maximality gives equality. The row $h(S_1)$ contains only the anchor of $S_1$, so $a(S)=a(S_1)$. All row supports are contained in $\rows(S_1)$.
\end{proof}

\begin{lemma}\label{fan:dcpo}
The poset $\RR$ is a dcpo, and
\[
 \sup\mathcal D=\bigcap\mathcal D
\]
for every directed family $\mathcal D\subseteq\RR$.
\end{lemma}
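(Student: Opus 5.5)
The plan is to show that $\bigcap\mathcal D$ lies in $\RR$. Once that is known, it is automatically the supremum: in reverse inclusion, upper bounds of $\mathcal D$ are the members of $\RR$ contained in every $S\in\mathcal D$, and the largest such set is $\bigcap\mathcal D$ itself.

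First apply Lemma~\ref{fan:anchor} to obtain a cofinal directed subfamily $\mathcal D'$. All members of $\mathcal D'$ share one anchor $p=(n,m)$ and are contained in one fixed $S_1\in\mathcal D$. Cofinality in reverse inclusion means every $S\in\mathcal D$ contains some member of $\mathcal D'$. Hence $\bigcap\mathcal D=\bigcap\mathcal D'$, and we may work with $\mathcal D'$. Write each $S\in\mathcal D'$ as $T(p;J_S,\mathbf k^S)$ with $J_S\subseteq\rows(S_1)\setminus\{n\}$.

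Next compute the intersection row by row. The anchor $p$ lies in every member, so $p\in\bigcap\mathcal D'$. For each row $i\in\rows(S_1)$ with $i>n$, the trace of $S$ on row $i$ is either empty or a tail $R_i(k^S_i)$. Let $J$ be the set of rows $i$ that are occupied in every $S\in\mathcal D'$. For $i\notin J$, some member misses row $i$, so the intersection misses row $i$. For $i\in J$, the traces form a family of tails, and their intersection is $R_i(k_i)$ with $k_i=\sup_S k^S_i$. The key point is that this supremum is finite, so the intersection is a genuine tail rather than empty.

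This finiteness is the main obstacle, and directedness is used exactly here. Suppose instead that $k^S_i$ is unbounded for some $i\in J$. Then the tails shrink to the empty set, although each member still occupies row $i$. To exclude this I would look at the finitely many rows in $\rows(S_1)$ and argue that the tails in each row of a chain-like directed family cannot shrink indefinitely. On reflection, however, this seems false: the sets $T(p;\{i\},(k))$ for $k\in\N$ form a descending chain whose intersection is $\{p\}=T(p;\emptyset,())$. That intersection still belongs to $\RR$. So the correct treatment is to let $J$ consist of the rows where $\sup_S k^S_i<\infty$. Rows with unbounded thresholds contribute nothing to the intersection, exactly like rows that some member misses.

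With that correction, $\bigcap\mathcal D'=\{p\}\cup\bigcup_{i\in J}R_i(k_i)$. Here $J\subseteq\{n+1,n+2,\ldots\}$ is finite and the $k_i$ are finite, so the intersection is a member of $\RR$, which by the first paragraph completes the proof. The anchor guarantees that the intersection is nonempty, which matters because $\emptyset\notin\RR$; this is precisely why Lemma~\ref{fan:anchor} is needed.
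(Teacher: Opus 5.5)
Your argument is correct and is essentially the paper's proof: you reduce via Lemma~\ref{fan:anchor} to a cofinal subfamily with a common anchor inside a fixed member, compute the intersection row by row using the same three cases (some member misses the row, the thresholds are bounded, the thresholds are unbounded), and note that a member of $\RR$ contained in every element of $\mathcal D$ is automatically the supremum in the reverse-inclusion order. Your self-correction lands on the right statement, but drop the earlier remark that directedness is used in the threshold step: directedness enters only through Lemma~\ref{fan:anchor}, which supplies the surviving anchor and the finite row support.
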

\begin{proof}
Use Lemma~\ref{fan:anchor}, and let $p$ be the common anchor of $\mathcal D'$. In each higher row of the fixed finite support, a member has either empty trace or a full tail. If some member has empty trace in that row, so does the intersection. Otherwise the intersection is a full tail when the thresholds are bounded, and is empty when they are unbounded. The anchor $p$ survives. Hence $\bigcap\mathcal D'$ again has the form \eqref{fan:fan}. Cofinality gives $\bigcap\mathcal D=\bigcap\mathcal D'$. This intersection is an upper bound in the reverse-inclusion order, and any other upper bound is contained in every member of $\mathcal D$, hence in their intersection.
\end{proof}

\subsection{Meet-continuity}
We verify \eqref{def:mc} by constructing directed families of common lower bounds.

\begin{proposition}\label{fan:mc}
The dcpo $\RR$ is meet-continuous. More precisely, whenever $S\leq\sup\mathcal D$, there is a directed subset of ${\downarrow} S\cap{\downarrow}\mathcal D$ with supremum $S$.
\end{proposition}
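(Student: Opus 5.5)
The plan is to produce the directed family explicitly, as sets obtained by enlarging $S$ with traces of members of $\mathcal D$. In the reverse-inclusion order, an element $X\in\RR$ lies in ${\downarrow}S\cap{\downarrow}D$ exactly when $S\cup D\subseteq X$. The hypothesis $S\leq\sup\mathcal D$ says $\bigcap\mathcal D\subseteq S$, by Lemma~\ref{fan:dcpo}. So I want a directed family of sets $X_D\in\RR$ with $S\cup D\subseteq X_D$ whose intersection is exactly $S$. Once this is done, meet-continuity~\eqref{def:mc} follows at once, because a directed subset of ${\downarrow}S\cap{\downarrow}\mathcal D$ with supremum $S$ puts $S$ in the Scott closure of that set.

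First I pass to the cofinal subfamily $\mathcal D'$ of Lemma~\ref{fan:anchor}. Its members share an anchor $p$, and all their rows lie in the finite set $\rows(S_1)$. By cofinality it suffices to work with $\mathcal D'$. Write $S=T(q;J,\mathbf k)$. Since every member of $\mathcal D'$ contains $p$, we have $p\in\bigcap\mathcal D'\subseteq S$. This gives two cases.
\begin{itemize}
\item If $p=q$, then every $S\cup D$ with $D\in\mathcal D'$ has the single point $q$ in its lowest row $h(q)$. Each higher row of $S\cup D$ is a union of at most two tails in that row, hence a tail. So $S\cup D$ already has the form~\eqref{fan:fan}.
\item If $p\neq q$, then $p$ lies in a higher row $h(p)>h(q)$, and inside a tail $R_{h(p)}(k_{h(p)})$ of $S$. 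Hence in $S\cup D$ the isolated point $p$ of $D$ is absorbed into the tail of $S$ in that row. Again $S\cup D$ has anchor $q$ and tails in the higher rows, so $S\cup D\in\RR$.
\end{itemize}
In both cases I set $X_D=S\cup D$.

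Directedness is inherited from $\mathcal D'$. If $D_3\subseteq D_1\cap D_2$, then $X_{D_3}\subseteq X_{D_1}\cap X_{D_2}$, so $X_{D_3}$ lies above both in $\RR$. For the supremum, Lemma~\ref{fan:dcpo} gives
\[
\sup_D X_D=\bigcap_{D\in\mathcal D'}(S\cup D)=S\cup\bigcap\mathcal D'=S,
\]
using distributivity of the fixed union over the intersection and $\bigcap\mathcal D'\subseteq S$.

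The main obstacle I expect is checking that $S\cup D$ really belongs to $\RR$. The delicate point is the anchor bookkeeping: the anchor of $D$ is a single point, not a tail, and it must not create a second point in the lowest row of the union. This is why I first normalize to a common anchor via Lemma~\ref{fan:anchor} and then use $p\in S$ to place $p$ either at $q$ or inside a tail of $S$. Once that is settled, the remaining verifications are routine row-by-row checks.
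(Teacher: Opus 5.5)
Your proposal is correct and follows essentially the same route as the paper. Both arguments restrict to the common-anchor cofinal subfamily of Lemma~\ref{fan:anchor} and note that its anchor lies in $\bigcap\mathcal D\subseteq S$, so each $S\cup T$ is again a row-tail set; both then take $\sup_T(S\cup T)=S\cup\bigcap\mathcal D'=S$, and your two-case anchor check ($p=q$, or $p$ absorbed into a tail of $S$) only spells out what the paper states in one line.
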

\begin{proof}
Put $S_\infty=\sup\mathcal D=\bigcap\mathcal D$; the hypothesis is $S_\infty\subseteq S$. Choose $\mathcal D'$ as in Lemma~\ref{fan:anchor}. Its common anchor belongs to $S_\infty$, hence to $S$. For every $T\in\mathcal D'$, the union $S\cup T$ again has the form~\eqref{fan:fan}: the anchor of $T$ already lies in $S$, and the union of two tails in one row is the tail with the smaller threshold. Hence
\[
 \mathcal E=\{S\cup T:T\in\mathcal D'\}
\]
is a directed family in $\RR$. Each member is below both $S$ and a member of $\mathcal D$. By Lemma~\ref{fan:dcpo},
\[
 \sup\mathcal E
 =\bigcap_{T\in\mathcal D'}(S\cup T)
 =S\cup\bigcap\mathcal D'
 =S\cup S_\infty=S.
\]
This proves the closure condition. No unrestricted finite-union operation on $\RR$ is used.
\end{proof}

\subsection{Non-sobriety}
\begin{lemma}\label{fan:irr}
Any two nonempty Scott open subsets of $\RR$ contain a common singleton.
\end{lemma}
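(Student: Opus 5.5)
The plan is to show that every nonempty Scott open set contains all singletons $\{q\}$ for $q$ far enough out in every sufficiently high row. The common singleton is then found by going to a row above both anchors and a column beyond both thresholds. Throughout, a \emph{singleton} means a member $T(q;\emptyset,\cdot)=\{q\}$ of $\RR$. Since the order is reverse inclusion, a singleton $\{q\}$ lies above every $S\in\RR$ containing $q$.

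\emph{Step 1 (anchors).} Let $U$ be a nonempty Scott open set and pick $S\in U$ with anchor $p=(n,m)$. Since $\{p\}\subseteq S$, we have $S\leq\{p\}$. Because $U$ is an upper set, $\{p\}\in U$.

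\emph{Step 2 (one row above the anchor).} Fix any row $i>n$. For $k\in\N$ put
\[
 T_k=T(p;\{i\},k)=\{p\}\cup R_i(k),
\]
which belongs to $\RR$ because $i>n$. The sets $T_k$ decrease in $k$, so $\{T_k:k\in\N\}$ is a chain, hence directed in $\RR$. By Lemma~\ref{fan:dcpo}, $\sup_k T_k=\bigcap_k T_k=\{p\}$, since the tails $R_i(k)$ have empty intersection. As $\{p\}\in U$ and $U$ is Scott open, some $T_{k_i}$ lies in $U$. For every $j\geq k_i$ we have $(i,j)\in T_{k_i}$, so $T_{k_i}\leq\{(i,j)\}$. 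Since $U$ is an upper set, $\{(i,j)\}\in U$. Thus for each row $i>n$ there is a threshold $k_i(U)$ such that $\{(i,j)\}\in U$ whenever $j\geq k_i(U)$.

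\emph{Step 3 (the common singleton).} Let $V$ be a second nonempty Scott open set, and apply Steps 1 and 2 to it, obtaining an anchor row $n'$ and thresholds $k_i(V)$. Choose a row $i>\max(n,n')$ and a column $j\geq\max(k_i(U),k_i(V))$. Then $\{(i,j)\}$ lies in $U\cap V$, which proves the lemma.

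The only step with content is Step 2. It requires checking that $\{p\}$ is the supremum of a directed family of fans whose members contain points from a strictly higher row. This uses the reverse-inclusion description of suprema in Lemma~\ref{fan:dcpo}, together with the fact that a single tail may be attached to $p$ in any row above $h(\{p\})$. Everything else is the upper-set property of Scott open sets.
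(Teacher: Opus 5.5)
Your proof is correct and follows essentially the same route as the paper's. You use upperness to pass to the anchor singleton, approximate it by the chain $\{p\}\cup R_i(k)$ in a row above the anchors, and take $\{(i,j)\}$ with $j$ beyond both thresholds; the only cosmetic difference is that you record the threshold for every row above each anchor and choose the common row afterwards, whereas the paper fixes $i>\max\{n,r\}$ at the outset.
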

\begin{proof}
Let $U,V$ be nonempty Scott open sets. Choose $S\in U$ and $T\in V$, with anchors $p=(n,m)$ and $q=(r,s)$. Upperness gives $\{p\}\in U$ and $\{q\}\in V$. Fix $i>\max\{n,r\}$. For each $k\in\N$, put
\[
 A_k=\{p\}\cup R_i(k),
 ~
 B_k=\{q\}\cup R_i(k).
\]
Both $(A_k)_{k\in\N}$ and $(B_k)_{k\in\N}$ are increasing sequences in the reverse-inclusion order on $\RR$. By Lemma~\ref{fan:dcpo},
\[
 \sup_{k\in\N}A_k=\{p\},
 ~
 \sup_{k\in\N}B_k=\{q\}.
\]
Since $U$ and $V$ are Scott open, there exist $k_U,k_V\in\N$ such that $A_{k_U}\in U$ and $B_{k_V}\in V$. If $j\geq\max\{k_U,k_V\}$, then $\{(i,j)\}$ is an upper bound of both $A_{k_U}$ and $B_{k_V}$ and hence belongs to $U\cap V$.
\end{proof}

\begin{theorem}\label{fan:main}
The countably infinite poset $\RR$ is a meet-continuous dcpo whose Scott space is not sober.
\end{theorem}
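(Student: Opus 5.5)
The three ingredients are already available: countability is immediate from the construction, the dcpo property is Lemma~\ref{fan:dcpo}, and meet-continuity is Proposition~\ref{fan:mc}. What remains is non-sobriety. For this I will show that $\RR$ itself is an irreducible Scott closed set that is not a principal ideal ${\downarrow}a$.

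\textbf{Irreducibility.} The whole space $\RR$ is Scott closed and nonempty. By Lemma~\ref{fan:irr}, any two nonempty Scott open sets meet; they even share a singleton. Hence any two open sets meeting $\RR$ have an intersection meeting $\RR$, so $\RR$ is irreducible.

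\textbf{No generic point.} Suppose $\RR={\downarrow}a$ for some $a\in\RR$. Then $a$ is the greatest element, so $a\subseteq S$ for every $S\in\RR$ under reverse inclusion. However, distinct singletons $\{p\}$ and $\{q\}$ with $p\neq q$ both lie in $\RR$. Taking $J=\emptyset$ in \eqref{fan:fan} shows every singleton $\{p\}$ is a member. Any common upper bound would be a nonempty subset of $\{p\}\cap\{q\}=\emptyset$, which is impossible. So $\RR$ has no top element, and irreducibility together with sobriety fails.

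There is no real obstacle here, since all the work is in Lemma~\ref{fan:irr}. The only point to check is that $\emptyset\notin\RR$, so that the singletons are maximal. I would state this explicitly: $\{p\}\leq T$ means $T\subseteq\{p\}$, which forces $T=\{p\}$. Thus $\{p\}$ and $\{q\}$ are distinct maximal elements and cannot both lie below a single $a$.
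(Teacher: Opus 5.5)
Your proposal is correct and matches the paper's proof: directed completeness and meet-continuity come from Lemma~\ref{fan:dcpo} and Proposition~\ref{fan:mc}, the whole space $\RR$ is irreducible by Lemma~\ref{fan:irr}, and $\RR$ is not a principal ideal because it has no greatest element, since distinct singletons have no common upper bound. Your explicit remark that $\emptyset\notin\RR$ makes the singletons maximal states the same fact the paper uses when it says a greatest element would be a nonempty set contained in every singleton.
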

\begin{proof}
Directed completeness and meet-continuity were proved in Lemma~\ref{fan:dcpo} and Proposition~\ref{fan:mc}. Lemma~\ref{fan:irr} says that the whole space $\RR$ is irreducible. It is a nonempty Scott closed subset of itself, but it is not a principal ideal because $\RR$ has no greatest element: a greatest element would be a nonempty set contained in every singleton of $\N^2$. Point closures in a Scott space are principal ideals, so this irreducible closed set has no generic point.
\end{proof}

\subsection{A normal form for all ideals}
For $S\in\RR$, let $G$ be a finite subset of
\[
 \{i>h(S):i\notin\rows(S)\}.
\]
For $t\in\N$, set
\[
 S_G(t)=S\cup\bigcup_{i\in G}R_i(t),
 ~
 \mathcal J(S,G)={\downarrow}\{S_G(t):t\in\N\}.
\]
These are elements of $\RR$, the chain $S_G(t)$ increases to $S$, and $\mathcal J(S,G)$ is an ideal. Its exact membership description is
\begin{equation}\label{fan:membership}
 \mathcal J(S,G)
 =\{T\in\RR:S\subseteq T,\ G\subseteq\rows(T)\}.
\end{equation}
Indeed, the forward implication follows from $S_G(t)\subseteq T$ for some $t$. Conversely, if $S\subseteq T$, then $h(T)\leq h(S)$. Every row in $G$ is therefore represented in $T$ by a full tail, not by the anchor. A common upper bound for the finitely many tail thresholds gives $S_G(t)\subseteq T$.

\begin{theorem}\label{fan:ideals}
Every ideal $I$ of $\RR$ has a unique representation $I=\mathcal J(S,G)$ as above. Here $S=\sup I$, and
\begin{equation}\label{fan:recoverG}
 G=\left(\bigcap_{T\in I}\rows(T)\right)\setminus\rows(S).
\end{equation}
In particular, $\mathcal J(S,G)$ is principal if and only if $G=\emptyset$, and $\RR$ has countably infinitely many ideals.
\end{theorem}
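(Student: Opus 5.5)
Given an ideal $I$, I first take $S=\sup I=\bigcap I$, which exists by Lemma~\ref{fan:dcpo}. Since $I$ is directed, Lemma~\ref{fan:anchor} supplies a cofinal directed subfamily $I'$ whose members share one anchor $p$ and lie inside a fixed member $T_1\in I$; because $I$ is a lower set, $I'$ is cofinal in $I$ in the order. All members of $I'$ have anchor $p$ and rows inside the finite set $\rows(T_1)$. For each row $i\in\rows(T_1)$ above $h(T_1)$, the traces of members of $I'$ in row $i$ are empty or full tails. As in the proof of Lemma~\ref{fan:dcpo}, there are three cases. Either some member has empty trace, and then row $i\notin\rows(S)$ and $i\notin\bigcap_{T\in I}\rows(T)$. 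Or all traces are tails with bounded thresholds, and then $i\in\rows(S)$. Or all traces are tails with unbounded thresholds, and then $i\notin\rows(S)$ but $i\in\rows(T)$ for every $T\in I'$.

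I then define $G$ by \eqref{fan:recoverG} and check that $G$ consists exactly of the rows in the third case. A row in $\bigcap_{T\in I}\rows(T)$ lies in $\rows(T_1)$, and it cannot be the anchor row $h(S)=h(p)$, which belongs to $\rows(S)$. So $G$ is a finite subset of $\{i>h(S):i\notin\rows(S)\}$, and $\mathcal J(S,G)$ is defined.

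For the inclusion $I\subseteq\mathcal J(S,G)$ I use \eqref{fan:membership}: every $T\in I$ satisfies $S\subseteq T$, since $S$ is the supremum, and $G\subseteq\rows(T)$ by the definition of $G$. For the reverse inclusion, it suffices by lower-closedness of $I$ to show that each $S_G(t)$ lies in $I$. This is the main step. Because the rows of $\rows(S)$ have bounded thresholds along $I'$, eventually constant in fact, together with the anchor, there is a member $T\in I'$ whose trace in each row of $\rows(S)$ equals that of $S$. The delicate point is that thresholds bounded along a directed family mean the intersection tail is attained by some member, using finiteness and directedness. In each row of $G$, directedness and unboundedness give a member with threshold at least $t$. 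Any further rows of $T$ outside $\rows(S)\cup G$ are first-case rows, where some member has empty trace. Taking a common upper bound in $I'$ of these finitely many members yields $T\in I'$ with $T\subseteq S_G(t)$, so $S_G(t)\leq T$ and hence $S_G(t)\in I$.

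Uniqueness follows because $\sup\mathcal J(S,G)=S$, as the chain $S_G(t)$ increases to $S$. Also $\bigcap_{T\in\mathcal J(S,G)}\rows(T)\setminus\rows(S)=G$: by \eqref{fan:membership} every member contains the rows of $G$, and $S$ itself shows nothing else outside $\rows(S)$ is forced. Hence $S$ and $G$ are recovered by \eqref{fan:recoverG}. If $G=\emptyset$, then $\mathcal J(S,G)={\downarrow}S$. If $G\neq\emptyset$, then $S\notin\mathcal J(S,G)$ by \eqref{fan:membership}, so the ideal has no greatest element and is not principal. Countability follows since the pairs $(S,G)$ range over a countable set and the principal ideals are already infinite.
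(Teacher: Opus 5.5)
Your argument follows the paper's proof closely: fix the anchor with Lemma~\ref{fan:anchor}, sort the finitely many higher rows into empty, bounded-threshold and unbounded-threshold rows, and combine finitely many witnesses by directedness to get $S_G(t)\in I$. Two departures are harmless streamlinings of the paper's passage through $T_2$ and $I''$. You define $G$ directly by \eqref{fan:recoverG}, and you use \eqref{fan:membership} for the inclusion $I\subseteq\mathcal J(S,G)$. Both are valid once you have checked that $G$ is admissible for $S$, which you do.

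There is one wrong step, in the uniqueness part. You say that ``$S$ itself'' shows that no row outside $\rows(S)\cup G$ is common to all members of $\mathcal J(S,G)$. But when $G\neq\emptyset$, \eqref{fan:membership} gives $S\notin\mathcal J(S,G)$, as you note yourself two sentences later. Taken literally, using $S$ as a member would give $\bigcap_{T\in\mathcal J(S,G)}\rows(T)\subseteq\rows(S)$, that is, $G=\emptyset$. The correct witness is $S_G(0)$, or any $S_G(t)$: it belongs to $\mathcal J(S,G)$ and its row set is exactly $\rows(S)\cup G$. With that replacement, the recovery formula \eqref{fan:recoverG} holds for every admissible pair. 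Uniqueness then follows as you intended, and as in the paper.
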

\begin{proof}
Apply Lemma~\ref{fan:anchor} to the directed set $I$. There is $T_1\in I$ such that
\[
 I' =\{T\in I:T\subseteq T_1\}
\]
is cofinal and has a fixed anchor $p$. All higher rows of members of $I'$ belong to the finite set $\rows(T_1)\setminus\{h(T_1)\}$. Put
\[
 J_\infty=\left(\bigcap_{T\in I'}\rows(T)\right)\setminus\{h(T_1)\}.
\]
Thus $J_\infty$ consists precisely of the rows strictly above the common anchor that occur in every member of $I'$. For each row in $\rows(T_1)\setminus(\{h(T_1)\}\cup J_\infty)$, choose a member in which it is absent. Directedness combines these finitely many choices into $T_2\in I'$. After restricting to the cofinal family of members contained in $T_2$, the occupied rows are exactly $\{h(T_1)\}\cup J_\infty$.

For each $i\in J_\infty$, the corresponding tail thresholds have a supremum in $\N\cup\{\infty\}$. Let $J_{\mathrm{bd}}$ consist of the rows with finite supremum $k_i$, and put $G=J_\infty\setminus J_{\mathrm{bd}}$. The finite integer suprema are attained, and directedness attains all of them simultaneously. Restrict to a further cofinal family $I''$ in which every $J_{\mathrm{bd}}$-threshold equals $k_i$. Each row in $G$ remains present with unbounded threshold. Define
\[
 S=\{p\}\cup\bigcup_{i\in J_{\mathrm{bd}}}R_i(k_i).
\]
The intersection formula gives $S=\sup I$, and $G$ is admissible for $S$.

Fix $t\in\N$. In each row of $G$, choose a member of $I''$ whose threshold is at least $t$, and combine these finitely many choices by directedness. For the resulting $T$, we have $T\subseteq S_G(t)$, so $S_G(t)\leq T\in I$ and lowerness gives $S_G(t)\in I$.

Conversely, for $T\in I$, choose $T'\in I''$ with $T'\subseteq T$. If $t$ is at least all $G$-thresholds of $T'$, then
\[
 S_G(t)\subseteq T'\subseteq T,
\]
so $T\leq S_G(t)$. Hence $I=\mathcal J(S,G)$.

The supremum recovers $S$. Formula~\eqref{fan:membership} shows that every member of $I$ occupies each row of $\rows(S)\cup G$, and each $S_G(t)$ occupies exactly those rows. This proves \eqref{fan:recoverG} and uniqueness. If $G=\emptyset$, then $\mathcal J(S,G)={\downarrow} S$. If $G\neq\emptyset$, its supremum $S$ is not a member, so the ideal is nonprincipal. There are only countably many admissible pairs $(S,G)$, and the distinct principal ideals show that the number of ideals is countably infinite.
\end{proof}

The canonical chain $\{S_G(t):t\in\N\}$ belongs to the ideal; it need not be a subset of an originally supplied directed generating family. If $G\neq\emptyset$, the chains with thresholds $t$ and $2t$ generate the same ideal but are different subsets. Different ideals can also have the same supremum: for $S=\{(0,0)\}$, the ideals $\mathcal J(S,\{1\})$, $\mathcal J(S,\{2\})$ and ${\downarrow} S$ are distinct. Thus the persistent-row parameter $G$ cannot be omitted.

\subsection{Finite powers}
Miao, Xi, Li and Zhao proved that if two posets have only countably many incremental ideals (their term for nonprincipal ideals), then the Scott topology on their order product is the ordinary product topology~\cite[Lemma~4.1, pp.~827--829]{MXLZ}.

For a finite product $Q_1\times\cdots\times Q_r$, every ideal $I$ equals
\[
 I=\prod_{j=1}^r\pi_j(I).
\]
Each projection is an ideal. Conversely, given one point in each projection, directedness of $I$ provides a tuple above finitely many coordinate witnesses; lowerness then puts the prescribed tuple in $I$. Hence a finite product of posets with countably many ideals again has countably many ideals.

\begin{corollary}\label{fan:finitepowers}
For every integer $r\geq1$,
\[
 \sigma(\RR^r)=\prod_{j=1}^r\sigma(\RR).
\]
\end{corollary}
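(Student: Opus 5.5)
We prove the corollary by induction on $r$, combining the countability of ideals from Theorem~\ref{fan:ideals} with the criterion of Miao, Xi, Li and Zhao~\cite[Lemma~4.1]{MXLZ}. That criterion says that if two posets each have only countably many nonprincipal ideals, then the Scott topology on their order product is the ordinary product topology. The case $r=1$ is trivial.

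For the inductive step, write $\RR^{r+1}=\RR^r\times\RR$ with the order product. By Theorem~\ref{fan:ideals}, $\RR$ has only countably many ideals, and in particular only countably many nonprincipal ones. By the preceding paragraph, every ideal $I$ of $\RR^r$ equals $\prod_j\pi_j(I)$, and each projection is an ideal of $\RR$. So $I\mapsto(\pi_1(I),\dots,\pi_r(I))$ injects the ideals of $\RR^r$ into a finite power of a countable set. Hence $\RR^r$ also has countably many ideals. We also note that $\RR^r$ is a poset (indeed a dcpo), so the lemma applies. It gives
\[
 \sigma(\RR^{r+1})=\sigma(\RR^r)\times\sigma(\RR).
\]
Substituting the induction hypothesis $\sigma(\RR^r)=\prod_{j=1}^r\sigma(\RR)$ and using associativity of the ordinary product topology yields $\prod_{j=1}^{r+1}\sigma(\RR)$.

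The only real obstacle is checking that the hypothesis of~\cite[Lemma~4.1]{MXLZ} matches the countability we have. ``Incremental ideals'' are the nonprincipal ideals, and a countable supply of all ideals certainly bounds these. One must also confirm that the lemma concerns arbitrary posets (not just complete lattices) and the same notion of Scott topology used here, namely the one defined via existing directed suprema. With that granted, the argument is purely formal.
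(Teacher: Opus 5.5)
Your proposal is correct and follows the paper's proof essentially verbatim: countability of ideals of $\RR$ from Theorem~\ref{fan:ideals}, transfer to $\RR^r$ via $I=\prod_j\pi_j(I)$, and induction on $r$ using the criterion of Miao, Xi, Li and Zhao. Your extra remarks on injectivity of $I\mapsto(\pi_j(I))_j$ and on the hypotheses of that criterion only make explicit what the paper leaves implicit.
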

\begin{proof}
Theorem~\ref{fan:ideals} gives countably many ideals in $\RR$, and the preceding product observation gives countably many ideals in every finite power. Induction using the product criterion of Miao, Xi, Li and Zhao gives the topology equality.
\end{proof}

Countability of ideals does not enumerate all irreducible closed sets: the whole space $\RR$ is irreducible but is not directed, since two distinct singletons have no common upper bound.

\subsection{Compact sets, finite-row truncations and well-filteredness}
For $N\in\N$, put
\[
 E_N=\{0,\ldots,N\}\times\N,
 ~
 \RR_{\leq N}=\{S\in\RR:S\subseteq E_N\}.
\]
The compactness statement below does not assume saturation.

\begin{lemma}\label{fan:compact-rows}
Every compact subset of $\Sigma\RR$ is contained in $\RR_{\leq N}$ for some $N\in\N$.
\end{lemma}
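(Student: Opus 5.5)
The plan is to argue by contradiction: from a compact set whose members reach arbitrarily high rows, extract a sequence and a decreasing chain of Scott closed sets, each meeting the compact set, whose intersection misses it.

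First, a remark on what \emph{not} to do. One might try to show that $\RR_{\leq N}$, or some similar ``no high rows'' set, is Scott open and then cover. This fails. The chain $A_k=\{p\}\cup R_i(k)$ from Lemma~\ref{fan:irr} has supremum $\{p\}\in\RR_{\leq N}$, while every $A_k$ occupies row $i$. So no open set containing $\{p\}$ can exclude high rows, and the proof has to exploit the fact that compactness forbids \emph{unboundedly many distinct} high rows.

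So suppose $K\subseteq\RR$ is compact but lies in no $\RR_{\leq N}$. Choose inductively $S_0,S_1,\ldots\in K$ and rows $i_n\in\rows(S_n)$ with
\[
 i_n>\max\Bigl(\bigcup_{m<n}\rows(S_m)\Bigr).
\]
This is possible because each $\rows(S_m)$ is finite and $K$ meets rows beyond any bound. The choice gives $i_n\notin\rows(S_m)$ for $m<n$. Hence $S_n\not\subseteq S_m$, that is, $S_m\not\leq S_n$ in $\RR$, whenever $m<n$.

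For each $n$, set
\[
 C_n={\downarrow}\{S_m:m\geq n\}=\{T\in\RR:S_m\subseteq T\text{ for some }m\geq n\}.
\]
The key step, and the one I expect to be the main obstacle, is that each $C_n$ is Scott closed. It is a lower set by definition, so it remains to show it is closed under directed suprema. Let $\mathcal D\subseteq C_n$ be directed. By Lemma~\ref{fan:anchor}, there is a cofinal directed $\mathcal D'\subseteq\mathcal D$ whose members all lie inside one fixed $S^*\in\mathcal D$, so all their occupied rows lie in the finite set $\rows(S^*)$. Every $T\in\mathcal D'$ contains some $S_m$ with $m\geq n$, so $i_m\in\rows(T)\subseteq\rows(S^*)$. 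Since the $i_m$ are strictly increasing, only indices $m$ in a finite set $F$ can occur. Therefore
\[
 \mathcal D'\subseteq\bigcup_{m\in F}{\downarrow}S_m,
\]
and this is a finite union of principal ideals, hence Scott closed. It follows that $\sup\mathcal D=\sup\mathcal D'$ lies in this union, and so in $C_n$.

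To finish, note that the $C_n$ form a decreasing sequence of closed sets, each meeting $K$ (at $S_n$). Hence the open sets $\RR\setminus C_n$ form an increasing sequence. Compactness of $K$ forces $\bigcap_n C_n\cap K\neq\emptyset$: otherwise these open sets cover $K$, and a single one, say $\RR\setminus C_n$, would contain $K$, contradicting $S_n\in K\cap C_n$. Now pick $T\in K$ in every $C_n$. Then $T\in C_0$ gives $S_m\subseteq T$ for some $m$, and $T\in C_{n}$ for all $n$ yields arbitrarily large $m$ with $S_m\subseteq T$. In particular $i_m\in\rows(T)$ for infinitely many $m$, contradicting the finiteness of $\rows(T)$. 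No saturation of $K$ is used anywhere, in line with the remark preceding the statement.
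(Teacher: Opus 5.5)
Your proof is correct and follows essentially the paper's argument. Both extract elements of $K$ reaching strictly increasing rows, form a decreasing sequence of Scott closed sets indexed by tails of that sequence, each meeting $K$, and note that no element of $\RR$ lies in all of them because it occupies only finitely many rows.

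The differences are cosmetic. The paper picks points $p_n\in S_n$ and uses $C_m=\{S\in\RR:S\cap F_m\neq\emptyset\}$. This set equals ${\downarrow}\{\{p_n\}:n\geq m\}$ and is larger than your ${\downarrow}\{S_m:m\geq n\}$. The paper checks closedness through a filtered family of nonempty subsets of a finite set, where you reduce to a finite union of principal ideals.
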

\begin{proof}
Suppose that a compact set $K$ is not contained in any $\RR_{\leq N}$. Choose $S_n\in K$ and $p_n\in S_n$ so that the row indices of $p_n$ are strictly increasing. For $m\in\N$, let
\[
 F_m=\{p_n:n\geq m\},
 ~
 C_m=\{S\in\RR:S\cap F_m\ne\emptyset\}.
\]
Each $C_m$ is lower in the reverse-inclusion order. To check Scott closedness, let $\mathcal D\subseteq C_m$ be directed and fix $S_0\in\mathcal D$. The family $\mathcal D_0=\{S\in\mathcal D:S\subseteq S_0\}$ is directed and cofinal. The set $F=S_0\cap F_m$ is finite, because $S_0$ occupies finitely many rows and $F_m$ contains at most one point in any row. The nonempty subsets $S\cap F$, for $S\in\mathcal D_0$, form a filtered family in the finite set $F$, and hence have nonempty intersection. Lemma~\ref{fan:dcpo} gives
\[
 (\sup\mathcal D)\cap F_m
 =\Bigl(\bigcap\mathcal D_0\Bigr)\cap F_m\ne\emptyset.
\]
Thus $C_m$ is Scott closed. The sets $C_m$ decrease and all meet $K$, since $S_m\in K\cap C_m$. However, their intersection is empty: any fixed row-tail set $S\in\RR$ meets only finitely many of the distinct rows containing the $p_n$. This contradicts compactness of $K$.
\end{proof}

\begin{lemma}\label{fan:truncations}
Each $\RR_{\leq N}$ is an algebraic dcpo. Its Scott topology is the subspace topology inherited from $\Sigma\RR$.
\end{lemma}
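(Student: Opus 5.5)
The plan is to treat $\RR_{\leq N}$ as a sub-dcpo of $\RR$, identify its compact elements explicitly, and then lift each basic Scott open set of $\RR_{\leq N}$ to a Scott open set of $\RR$ by means of traces on $E_N$.

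\emph{Step 1: $\RR_{\leq N}$ is a sub-dcpo.} It is closed under directed suprema in $\RR$. A directed family of subsets of $E_N$ has intersection contained in $E_N$, and by Lemma~\ref{fan:dcpo} this intersection is its supremum in $\RR$. Hence $\RR_{\leq N}$ is a dcpo, with the same directed suprema as $\RR$.

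\emph{Step 2: compact elements.} Call $S\in\RR_{\leq N}$ \emph{full} if $\rows(S)=\{h(S),h(S)+1,\ldots,N\}$.
\begin{itemize}
\item The argument of Theorem~\ref{fan:ideals} runs verbatim inside $\RR_{\leq N}$. It shows that every ideal of $\RR_{\leq N}$ has the form $\mathcal J(S,G)\cap\RR_{\leq N}$, where $G$ is a set of absent rows in $(h(S),N]$.
\item For a full $S$ no such $G$ exists except $G=\emptyset$. So every directed family with supremum $S$ generates ${\downarrow}S$, and hence contains $S$. Thus full elements are compact.
\item For an arbitrary $S\in\RR_{\leq N}$, let $G$ be the set of all missing rows in $(h(S),N]$. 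The chain $S_G(t)$ consists of full elements and, as noted before Theorem~\ref{fan:ideals}, increases to $S$. It is directed and lies below $S$, so $S$ is a directed supremum of compact elements below it.
\end{itemize}
Hence $\RR_{\leq N}$ is algebraic. (Non-full elements are not compact, but we do not need this.)

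\emph{Step 3: subspace topology $\subseteq$ Scott topology.} If $U\in\sigma(\RR)$, then $U\cap\RR_{\leq N}$ is an upper set of $\RR_{\leq N}$. It is inaccessible by directed suprema because these suprema agree with those in $\RR$ (Step 1). So $U\cap\RR_{\leq N}$ is Scott open in $\RR_{\leq N}$.

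\emph{Step 4: Scott topology $\subseteq$ subspace topology.} By algebraicity, every Scott open subset of $\RR_{\leq N}$ is a union of sets ${\uparrow}c\cap\RR_{\leq N}$ with $c$ full. It therefore suffices to find, for each full $c$, a Scott open $W_c\subseteq\RR$ with $W_c\cap\RR_{\leq N}={\uparrow}c\cap\RR_{\leq N}$.

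The naive choice $W_c={\uparrow}c$ fails. A directed family may keep tails in rows above $N$ whose thresholds go to infinity, so no member lies inside $c$. Instead I will cut off those rows by taking traces, and put
\[
 W_c=\{T\in\RR: T\cap E_N\subseteq c\}.
\]

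\emph{Step 4a: $W_c$ is an upper set.} Upper in the reverse-inclusion order means closed under subsets. If $T'\subseteq T$ and $T\cap E_N\subseteq c$, then $T'\cap E_N\subseteq c$.

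\emph{Step 4b: $W_c$ restricts correctly.} Every member of $\RR_{\leq N}$ equals its own trace on $E_N$, so $W_c\cap\RR_{\leq N}={\uparrow}c\cap\RR_{\leq N}$.

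\emph{Step 4c: $W_c$ is Scott open in $\RR$.} Let $\mathcal D$ be directed with $\bigcap\mathcal D\cap E_N\subseteq c$. By Lemma~\ref{fan:anchor}, pass to a cofinal subfamily $\mathcal D'$ with a fixed anchor $p$, all contained in one member.
\begin{itemize}
\item If the row of $p$ exceeds $N$, every trace on $E_N$ is empty. Then every member of $\mathcal D'$ already lies in $W_c$.
\item Otherwise each trace $T\cap E_N$ contains $p$ and a finite union of tails, so it lies in $\RR_{\leq N}$. The traces form a directed family there. Its supremum is the intersection $\bigcap\mathcal D'\cap E_N=\bigcap\mathcal D\cap E_N$, which lies above $c$ in $\RR_{\leq N}$. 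Compactness of $c$ in $\RR_{\leq N}$ gives a single $T\in\mathcal D'$ with $T\cap E_N\subseteq c$, that is, $T\in W_c$.
\end{itemize}

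I expect Step 4c to be the main obstacle. This is exactly where full compactness in $\RR$ is unavailable and the trace device must carry the argument.
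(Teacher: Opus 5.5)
\textbf{Gap in Step 2.} You write that every directed family with supremum $S$ contains $S$, and conclude ``thus full elements are compact.'' That conclusion does not follow. You have shown only that a full $S$ is not the supremum of a directed family that misses it. Compactness in $\RR_{\leq N}$ asks for more: for every directed $\mathcal D\subseteq\RR_{\leq N}$ with $S\leq\sup\mathcal D$, that is $\bigcap\mathcal D\subseteq S$ with possibly strict inclusion, some member must lie above $S$.

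In a general dcpo the weaker property does not imply compactness. For example, adjoin to the chain $\omega\cup\{\top\}$ a point $x<\top$ that is incomparable with every $n$. The only directed set with supremum $x$ is $\{x\}$, yet $x\leq\sup\omega$ and no $n\geq x$.

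This is not cosmetic, because your later steps use the strong form. Step~4c applies compactness of $c$ to the traces, whose supremum $\bigcap\mathcal D\cap E_N$ is in general a proper subset of $c$. The basis claim in Step~4, that ${\uparrow}c$ is Scott open, and algebraicity itself also need the strong form.

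\textbf{How to repair it within your framework.} Let $\mathcal D\subseteq\RR_{\leq N}$ be directed with $S_\infty=\bigcap\mathcal D\subseteq S$. By your first bullet, the ideal it generates in $\RR_{\leq N}$ is $\mathcal J(S_\infty,G)\cap\RR_{\leq N}$ with $G\subseteq(h(S_\infty),N]$. The inclusion $S_\infty\subseteq S$ forces $h(S)\leq h(S_\infty)$. Fullness of $S$ then gives $G\subseteq(h(S),N]\subseteq\rows(S)$. By \eqref{fan:membership}, $S$ lies in this ideal, so $S\leq T$ for some $T\in\mathcal D$.

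Alternatively, the family $\{S\cup T:T\in\mathcal D'\}$ from Proposition~\ref{fan:mc} stays inside $\RR_{\leq N}$ and has supremum exactly $S$. Your weak property applies to it and yields $T\subseteq S$ for some $T$.

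\textbf{Comparison with the paper.} Once this is fixed, your proof is correct and close to the paper's. The paper proves compactness of full elements by a direct row-by-row argument rather than through the ideal classification, and it uses the same approximating chain. It lifts an arbitrary $U\in\sigma(\RR_{\leq N})$ by the same trace device. Your $W_c$ is exactly the paper's $\widetilde U$ for $U={\uparrow}c$, so the paper's version does not need algebraicity for the subspace assertion.
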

\begin{proof}
Directed suprema in $\RR_{\leq N}$ are the same intersections as in $\RR$. Consider an element containing a tail in every row strictly above its anchor up to row $N$:
\[
 T=\{(r,s)\}\cup\bigcup_{r<i\leq N}R_i(k_i).
\]
We show that $T$ is compact as an element of $\RR_{\leq N}$. Let $\mathcal D\subseteq\RR_{\leq N}$ be directed with $T\leq\sup\mathcal D$, equivalently $\bigcap\mathcal D\subseteq T$. By Lemma~\ref{fan:anchor}, restrict to a cofinal family with a fixed anchor $q$ and a fixed finite containing set. The anchor $q$ belongs to $T$. In each row above $q$, the intersection of the traces is contained in the corresponding tail of $T$. Hence some member already has its trace contained in that tail: either the row disappears, or a tail threshold reaches the required finite value. There are only finitely many rows. Directedness combines these choices into a member $S\in\mathcal D$ with $S\subseteq T$, that is, $T\leq S$.

For arbitrary $S\in\RR_{\leq N}$, fill all missing rows above its anchor with tails of threshold $n$:
\[
 S^{(n)}=S\cup
 \bigcup_{\substack{h(S)<i\leq N\\i\notin\rows(S)}}R_i(n).
\]
Each $S^{(n)}$ is compact by the preceding paragraph, and $S^{(n)}\leq S^{(n+1)}\leq S$ with $\sup_n S^{(n)}=S$. Consequently every element is a directed supremum of compact elements below it, so $\RR_{\leq N}$ is algebraic.

The inclusion into $\RR$ preserves directed suprema, so every ambient Scott open set has a Scott open trace. Conversely, for $U\in\sigma(\RR_{\leq N})$ define
\[
 \widetilde U
 =\{S\in\RR:h(S)>N\}
 \ \cup\
 \{S\in\RR:h(S)\leq N,\ S\cap E_N\in U\}.
\]
This is an upper set and its trace on $\RR_{\leq N}$ is $U$. If $\sup\mathcal D\in\widetilde U$, use Lemma~\ref{fan:anchor} to fix the anchor on a cofinal subfamily. When its row exceeds $N$, that subfamily is already in $\widetilde U$. Otherwise its intersections with $E_N$ form a directed family in $\RR_{\leq N}$ with supremum $(\sup\mathcal D)\cap E_N\in U$. Scott openness of $U$ yields a member of $\mathcal D$ in $\widetilde U$. Thus $\widetilde U$ is Scott open, proving the subspace assertion.
\end{proof}

\begin{theorem}\label{fan:wellfiltered}
The Scott space $\Sigma\RR$ is well-filtered but is neither coherent nor sober. It satisfies the Scott product equality for every finite power.
\end{theorem}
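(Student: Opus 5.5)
Non-sobriety is Theorem~\ref{fan:main}, and the finite-power product equality is Corollary~\ref{fan:finitepowers}. So the new content is well-filteredness and non-coherence.

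\emph{Well-filteredness.} I would use Lemma~\ref{wf:rudin-characterization}, or argue directly as follows. Let $\mathcal K\subseteq\mathcal Q(\Sigma\RR)$ be filtered, and let $U$ be Scott open with $\bigcap\mathcal K\subseteq U$. Fix $K_0\in\mathcal K$. By Lemma~\ref{fan:compact-rows}, $K_0\subseteq\RR_{\leq N}$ for some $N$. Replacing $\mathcal K$ by $\{K\in\mathcal K:K\subseteq K_0\}$, all members lie in $\RR_{\leq N}$. These members are not upper sets of $\RR$ inside $\RR_{\leq N}$, but their intersections with $\RR_{\leq N}$ are the members themselves. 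I would check that each is compact and saturated in the subspace $\RR_{\leq N}$. Saturation holds because a member is upper in $\RR$, so it is upper in the suborder. By Lemma~\ref{fan:truncations}, $\RR_{\leq N}$ with the subspace topology is the Scott space of an algebraic dcpo, hence sober and so well-filtered. Then $U\cap\RR_{\leq N}$ is open in the subspace and contains $\bigcap\mathcal K$. Well-filteredness of $\Sigma\RR_{\leq N}$ gives $K\subseteq U\cap\RR_{\leq N}\subseteq U$ for some $K$.

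One subtlety needs care. A compact saturated set $K$ of $\Sigma\RR$ is an upper set of $\RR$. Since $\RR_{\leq N}$ is itself upper in $\RR$ (subsets of $E_N$ stay in $E_N$), there is no conflict. Compactness transfers because the subspace topology agrees with the Scott topology of the truncation.

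\emph{Non-coherence.} This is the step I expect to need an explicit example. I would take two singletons $\{p\}$ and $\{q\}$ in distinct rows, say $p=(0,0)$ and $q=(1,0)$, and compact saturated sets $K_1={\uparrow}\{p\}\cup\cdots$. Simpler is ${\uparrow}$ of a finite set, which is always compact. Let $K_1={\uparrow}A_0$ with $A_0=\{p\}\cup R_2(0)$, and $K_2={\uparrow}B_0$ with $B_0=\{q\}\cup R_2(0)$. Then ${\uparrow}A_0$ consists of the subsets of $A_0$ in $\RR$. Its intersection with ${\uparrow}B_0$ is the family of members of $\RR$ contained in $A_0\cap B_0=R_2(0)$, which is the set of singletons $\{(2,j)\}$ and the tails $R_2(k)$ read as elements. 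The latter are not of the form~\eqref{fan:fan}, so the intersection is exactly the family of singletons $\{(2,j)\}$, $j\in\N$.

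This family is infinite and pairwise incomparable. I would show it is not compact. The sets $U_m=\bigcup_{j\ge m}{\uparrow}\{(2,j)\}$ are not obviously open, so instead I would use the cover by the sets $V_j=\RR\setminus{\downarrow}\{\{(2,i)\}:i\neq j\}$. I need $C_j={\downarrow}\{\{(2,i)\}:i\neq j\}=\{S:S\cap(\{2\}\times(\N\setminus\{j\}))\neq\emptyset\}$ to be Scott closed, which I would verify by the argument of Lemma~\ref{fan:compact-rows}. Alternatively, apply Lemma~\ref{fan:compact-rows}'s closed sets $C_m$ directly with $p_n$ ranging over row $2$. That doesn't fit the lemma's one-point-per-row hypothesis, so it needs adaptation: the closed sets $D_m=\{S:S\cap(\{2\}\times[m,\infty))\ne\emptyset\}$ are Scott closed, because a directed intersection of cofinitely-tailed traces in row $2$ still meets $[m,\infty)$ when each member does. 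The $D_m$ decrease, each meets the intersection, and they have empty common intersection with the singleton family. This contradicts compactness, so $\Sigma\RR$ is not coherent.
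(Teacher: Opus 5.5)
Your handling of non-sobriety, the finite-power equalities and well-filteredness is the paper's argument: truncate to $\RR_{\leq N}$ by Lemma~\ref{fan:compact-rows}, transfer compactness and saturation through Lemma~\ref{fan:truncations}, and use sobriety of the algebraic truncation. Your non-coherence example is a harmless variant of the paper's, and ${\uparrow}A_0\cap{\uparrow}B_0=\{\{(2,j)\}:j\in\N\}$ is computed correctly.

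\textbf{The gap is in the proof that this family is not compact.} Neither $C_j$ nor $D_m$ is Scott closed. The elements $S_k=\{(0,0)\}\cup R_2(k)$ lie in every $D_m$ and in every $C_j$, and they increase in $\RR$. By Lemma~\ref{fan:dcpo} their supremum is $\bigcap_k S_k=\{(0,0)\}$, which has empty trace in row $2$. So your claim is false that a directed intersection of row-$2$ tails, each meeting $\{2\}\times[m,\infty)$, still meets that set. The thresholds may be unbounded, and then the tails vanish in the limit; this is the same mechanism as in Lemma~\ref{fan:irr}. It is exactly why Lemma~\ref{fan:compact-rows} uses a set $F_m$ with at most one point per row, so that $S_0\cap F_m$ is finite. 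Your adaptation discards that finiteness.

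The conclusion is still true, and the paper's argument repairs it. In $\RR_{\leq 2}$ each singleton $\{(2,j)\}$ is a maximal element. It is also compact by Lemma~\ref{fan:truncations}, since there are no rows above its anchor up to row $2$. Hence ${\uparrow}\{(2,j)\}$ is open in $\Sigma\RR_{\leq 2}$, and your family is an infinite discrete subspace there. By the subspace equality in Lemma~\ref{fan:truncations}, it is not compact in $\Sigma\RR$.

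If you prefer explicit closed sets, use $\{S\in\RR:h(S)<2\}\cup\{S\in\RR:a(S)=(2,i),\ i\geq m\}$. This set is lower, and it is closed under directed suprema because, by Lemma~\ref{fan:anchor}, a directed supremum keeps the eventual common anchor. These sets decrease in $m$, each meets the family (it contains $\{(2,m)\}$), and their common trace on the family is empty. That contradicts compactness.
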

\begin{proof}
Let $\mathcal K\subseteq\mathcal Q(\Sigma\RR)$ be filtered and let $U\in\sigma(\RR)$ satisfy $\bigcap\mathcal K\subseteq U$. Fix $K_0\in\mathcal K$. By Lemma~\ref{fan:compact-rows}, $K_0\subseteq\RR_{\leq N}$ for some $N$. The subfamily
\[
 \mathcal K_0=\{K\in\mathcal K:K\subseteq K_0\}
\]
is filtered and cofinal under reverse inclusion, so it has the same intersection as $\mathcal K$. By Lemma~\ref{fan:truncations}, each member is compact and saturated in $\Sigma(\RR_{\leq N})$. This algebraic Scott space is sober, hence well-filtered~\cite[Section~II-1]{GHK}. Therefore some $K\in\mathcal K_0$ is contained in $U\cap\RR_{\leq N}$, proving well-filteredness of $\Sigma\RR$.

For noncoherence, take $S=\{(0,0)\}\cup R_1(0)$ and $T=\{(0,1)\}\cup R_1(0)$. Their principal upper sets are compact saturated, but
\[
 {\uparrow} S\cap{\uparrow} T
 =\bigl\{\{(1,j)\}:j\in\N\bigr\}.
\]
In $\RR_{\leq1}$ every $\{(1,j)\}$ is a compact maximal element, so this subspace is infinite discrete and not compact. By the subspace equality in Lemma~\ref{fan:truncations}, it is not compact in $\Sigma\RR$ either. Non-sobriety and the finite-power equalities are Theorem~\ref{fan:main} and Corollary~\ref{fan:finitepowers}.
\end{proof}

\section{Failure of the Scott product equality}\label{sec:word}
Let $\HH$ be the set of all finite words over the alphabet
$\N^{<\omega}\setminus\{\emptyset\}$. Its alphabet elements are called \emph{letters}; thus a letter is itself a nonempty finite string. Write $\ell(w)$ for the number of letters in a word $w$, and $|s|$ for the length of a letter $s$ as a natural-number string. The empty word is denoted by $\varepsilon$, while the empty string $\emptyset$ is used only as the root of a later recursion. We write $\langle s\rangle$ for the one-letter word with letter $s$, and write $s\preceq t$ when the letter $s$ is a prefix of the letter $t$. Define
\begin{equation}\label{word:eq:order}
\begin{aligned}
(u_1,\ldots,u_m)\leq(v_1,\ldots,v_n)
~\Longleftrightarrow~
&\exists\,1\leq i_1<\ldots<i_n\leq m\\
&\text{such that }u_{i_j}\preceq v_j~(1\leq j\leq n).
\end{aligned}
\end{equation}
A lower word may therefore contain additional letters and shorter matched letters. Let $X=2^{\N}$ with its usual product topology. Subsection~\ref{word:subsec:labels} assigns a clopen label $K_s\subseteq X$ to every letter. For a word $w=(w_1,\ldots,w_m)$, put
\[
 K(w)=\bigcup_{i=1}^m K_{w_i},
 ~
 K(\varepsilon)=\emptyset.
\]

\begin{theorem}\label{word:thm:main}
The poset $\HH$ is a countable meet-continuous dcpo with greatest element $\varepsilon$. The set
\begin{equation}\label{word:eq:Omega}
 \Omega=\{(u,v)\in\HH^2:K(u)\cup K(v)\neq X\}
\end{equation}
is Scott open for the coordinatewise order, contains $(\varepsilon,\varepsilon)$, and contains no ordinary product neighbourhood of that point. Hence
\[
 \sigma(\HH)\times\sigma(\HH)
 \subsetneq
 \sigma(\HH\times\HH).
\]
\end{theorem}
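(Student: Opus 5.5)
The plan is to split Theorem~\ref{word:thm:main} into four claims and to isolate the properties of the labels $K_s$ that each claim needs: (i) $\HH$ is a countable dcpo with top $\varepsilon$; (ii) $\HH$ is meet-continuous; (iii) $\Omega$ is Scott open in $\HH\times\HH$ and contains $(\varepsilon,\varepsilon)$; (iv) no rectangle $U\times V$ with $U,V\in\sigma(\HH)$ and $(\varepsilon,\varepsilon)\in U\times V$ lies inside $\Omega$. The strict inclusion then follows, since the inclusion $\sigma(\HH)\times\sigma(\HH)\subseteq\sigma(\HH\times\HH)$ always holds. I expect the labels of Subsection~\ref{word:subsec:labels} to satisfy the following, and I will check them there:
\begin{itemize}
\item[(a)] each $K_s$ is clopen;
\item[(b)] $s\preceq t$ implies $K_t\subseteq K_s$, so $K$ is antitone: $u\leq v$ gives $K(v)\subseteq K(u)$;
\item[(c)] along every infinite prefix branch $s_0\prec s_1\prec\cdots$ we have $\bigcap_n K_{s_n}=\emptyset$;
\item[(d)] a recursive covering property from the root $\emptyset$, which will drive step (iv).
\end{itemize}

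For (i), countability holds because letters are finite strings, and $\varepsilon$ is greatest since the empty index sequence witnesses $u\leq\varepsilon$. Antisymmetry follows by comparing lengths and then letters. For directed completeness, take a directed $\mathcal D\subseteq\HH$ and fix $d_0\in\mathcal D$. Every $d\geq d_0$ has $\ell(d)\leq\ell(d_0)$, and its letters extend, via some embedding, letters of $d_0$. I will pass to a cofinal subfamily on which the length is eventually constant and a single embedding pattern recurs; I expect a finite-pigeonhole argument, in the style of Lemma~\ref{fan:anchor}, to give this. In each position the letters then form a prefix chain. The supremum keeps a position if its chain stabilises and deletes it if the chain is infinite. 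The point to verify is that deletion yields an upper bound: a word with a letter removed sits above the word, because the embedding may skip indices. Together with (b) and (c), this gives the key identity
\[
K(\sup\mathcal D)=\bigcap_{d\in\mathcal D}K(d).
\]
For (ii), I will use the reformulation of Section~\ref{sec:relations}: meet-continuity is equivalent to Scott openness of $\mathrm{LB}_U$. Concretely, if $x\leq\sup\mathcal D$, then concatenating $x$ with $d$ produces a common lower bound of $x$ and $d$ with more letters. These concatenations form a directed family whose supremum should be $x$, by the same stabilise-or-delete analysis; this mirrors Proposition~\ref{fan:mc}.

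For (iii), $\Omega$ is an upper set by (b), and $(\varepsilon,\varepsilon)\in\Omega$ because $K(\varepsilon)=\emptyset$. Suppose $\mathcal D\subseteq\HH^2$ is directed with $\sup\mathcal D\in\Omega$ but $\mathcal D\cap\Omega=\emptyset$. The sets $K(u)\cup K(v)$ for $(u,v)\in\mathcal D$ form a filtered family; by the key identity their intersection is $K(\sup\pi_1\mathcal D)\cup K(\sup\pi_2\mathcal D)$. Since each member equals $X$, the intersection would be $X$, contradicting $\sup\mathcal D\in\Omega$.

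Step (iv) is the main obstacle, and it is where the labels must be carefully built. Given Scott open $U,V\ni\varepsilon$, I would construct $u\in U$ and $v\in V$ with $K(u)\cup K(v)=X$. The mechanism is that $\varepsilon$ is the supremum of chains $\langle s_n\rangle$ along any infinite branch, by (c). So $U$ contains $\langle s\rangle$ for some letter on each branch, and an interleaved recursion is needed. At the root, $U$ chooses a letter $s$ and $V$ chooses a letter $t$; whatever is left uncovered must in turn be covered by further letters that $U$ and $V$ are forced to accept. Adding letters is legitimate because lower words with extra letters stay in the upper sets $U,V$ only if appended above; to keep words inside $U$, I will use, at each stage, the Scott openness of $U$ applied to the chains obtained by extending a newly appended letter along a branch. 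Compactness of $X$ together with clopenness (a) should make this recursion terminate after finitely many letters with a full cover. This is exactly what property (d) of the labels has to guarantee, and making (d) precise is the step I expect to take the most work.
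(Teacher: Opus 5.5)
\textbf{There is a genuine gap, and it sits in the design of the labels.} Property (c) does fail for the paper's labels, but the deeper problem is that (c) contradicts the statement itself. Suppose every branch satisfies $\bigcap_n K_{s_n}=\emptyset$. Take a fixed-length cofinal sequence $w^k$ in a directed set. Each unbounded coordinate runs along a branch, so by compactness of $X$ its label is eventually empty. Hence $K(w^k)=K(\sup\mathcal D)$ for large $k$. Then $U_0=\{u:K(u)=\emptyset\}$ is an upper, Scott open neighbourhood of $\varepsilon$ with $U_0\times U_0\subseteq\Omega$, so step (iv) fails whatever (d) says.

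The paper's labels deliberately violate (c). By Lemma~\ref{word:lem:labels}, each residual $\mathcal N_\alpha=\bigcap_kK_{\alpha|k}$ is closed nowhere dense, and every closed nowhere dense set is contained in all labels along some branch. So your key identity $K(\sup\mathcal D)=\bigcap_d K(d)$ is false: $\sup_k\langle\alpha|k\rangle=\varepsilon$, while $\mathcal N_\alpha$ can be nonempty.

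Step (iii) therefore needs a Baire-category argument instead. For a fixed-length cofinal sequence in $\mathcal C=\{w:K(w)=X\}$ one has $X=\bigcup_i\bigcap_kK_{s_i^k}$. This is the union of the stable clopen labels and finitely many nowhere dense residuals. A nonempty open set cannot be covered by finitely many closed nowhere dense sets, so the stable labels alone cover $X$. Hence $\mathcal C$ is Scott closed, and $\Omega$ is the preimage of $\HH\setminus\mathcal C$ under concatenation, which preserves directed suprema.

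The nonempty residuals are exactly what drives (iv), and this is the idea missing from your unspecified (d). Fix branches $\beta_n$ with $\emptyset\neq K_{\beta_n|k}\subseteq B_n$ for all $k$. Since $u_{n-1}\langle\beta_n|k\rangle$ increases to $u_{n-1}\in U$, Scott openness gives $u_n=u_{n-1}\langle s_n\rangle\in U$ with $\emptyset\neq K_{s_n}\subseteq B_n$. Then $\mathcal N=X\setminus\bigcup_nK_{s_n}$ is closed nowhere dense. A branch $\alpha$ with $\mathcal N\subseteq K_{\alpha|k}$ for all $k$ gives $\langle t\rangle\in V$ with $t=\alpha|k$. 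Compactness of $X\setminus K_t\subseteq\bigcup_nK_{s_n}$ then gives $m$ with $K(u_m)\cup K_t=X$.

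\textbf{Step (ii) also fails as written.} Concatenation preserves directed suprema, so $\sup_{d\in\mathcal D}xd=x\cdot\sup\mathcal D$. This word has $\ell(x)+\ell(\sup\mathcal D)$ letters and equals $x$ only when $\sup\mathcal D=\varepsilon$. For example, take $x=\langle(0)\rangle$ and $\mathcal D=\{\langle(0,0)\rangle\}$. Every concatenation is $\langle(0),(0,0)\rangle$, which is strictly below $x$, so $x$ is not in the Scott closure of these common lower bounds. The analogy with $S\cup T$ in $\RR$ breaks: union absorbs $S_\infty\subseteq S$, but concatenation does not absorb $\sup\mathcal D\geq x$.

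The paper instead amalgamates. It matches the stable letters of $w^k$ to anchor positions $j_1<\cdots<j_r$ of $x$ and outputs the prefix $x_{j_t}$ at each anchor. It inserts only the unbounded letters of $w^k$ between anchors. These words ascend and lie below both $x$ and $w^k$, and their supremum deletes exactly the inserted letters, giving $x$. Concatenation is only the special case $r=0$.

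In (i) you also still owe the leastness half of the supremum claim, not just that deletion gives an upper bound. The paper does this by a pigeonhole argument over the finitely many position maps into a given upper bound.
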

This gives a negative answer to Question~1.1 of Xu~\cite{Xu}.

\subsection{The order and directed suprema}
Countability is immediate. Identity embeddings, composition, and the fact that opposite comparisons force equal lengths and identity position maps verify the partial-order axioms. The empty word is greatest.

\begin{lemma}\label{word:lem:sup}
Every directed subset of $\HH$ has a supremum. If an ascending cofinal sequence has eventually constant word length, then its supremum is obtained by retaining, in their original order, exactly the letter coordinates whose internal lengths eventually stabilize.
\end{lemma}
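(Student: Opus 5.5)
We prove Lemma~\ref{word:lem:sup} by first extracting from an arbitrary directed set $D\subseteq\HH$ a cofinal ascending sequence, and then computing the supremum of such a sequence explicitly.

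\emph{Reduction to chains.} Since $\HH$ is countable, $D$ is countable. Enumerate $D=\{d_0,d_1,\ldots\}$ and build $e_0=d_0$ and $e_{n+1}\in D$ above both $e_n$ and $d_{n+1}$. The resulting ascending sequence is cofinal in $D$, so $D$ and $(e_n)$ have the same upper bounds. It therefore suffices to treat ascending sequences.

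\emph{Lengths stabilize.} If $u\leq v$, then the embedding $i_1<\cdots<i_n$ forces $\ell(v)\leq\ell(u)$. Along an ascending sequence the word lengths are thus nonincreasing natural numbers, so they are eventually constant, say equal to $L$ from stage $n_0$ on.

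\emph{The embeddings are eventually identities.} For $n\geq n_0$, the comparison $e_n\leq e_{n+1}$ between words of equal length $L$ can only use the identity position map. Hence the $j$-th letter of $e_n$ is a prefix of the $j$-th letter of $e_{n+1}$. For each coordinate $j$, the letters $e_n(j)$ therefore form a $\preceq$-chain of strings.

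\emph{Defining the candidate supremum.} Let $F$ be the set of coordinates whose chain is eventually constant, with stable value $s_j$. Let $w$ be the word $(s_j)_{j\in F}$, listed in the original order. This is a finite word; it may be $\varepsilon$, which is greatest and hence always an upper bound.

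\emph{$w$ is an upper bound.} Fix $n\geq n_0$. The inclusion of $F$ into $\{1,\ldots,L\}$ is increasing, and $e_n(j)\preceq s_j$ for each $j\in F$. So $e_n\leq w$, and earlier terms lie below $e_{n_0}$.

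\emph{$w$ is least.} This is the main step. Let $v=(v_1,\ldots,v_k)$ be any upper bound. For each $n\geq n_0$, choose an embedding $\phi_n\colon\{1,\ldots,k\}\to\{1,\ldots,L\}$ that is strictly increasing and satisfies $e_n(\phi_n(i))\preceq v_i$. There are only finitely many such maps, so some single map $\phi$ occurs for infinitely many $n$.

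For each $i$ we then have $e_n(\phi(i))\preceq v_i$ for infinitely many $n$. Because the chain at coordinate $\phi(i)$ is $\preceq$-increasing, this holds for all $n$. The chain is thus bounded in length by $|v_i|$, so it stabilizes, giving $\phi(i)\in F$ and $s_{\phi(i)}\preceq v_i$.

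Consequently $\phi$ factors through $F$ as an increasing map into the positions of $w$, witnessing $w\leq v$. Hence $w$ is the supremum of the sequence, and therefore of $D$.

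The same argument, applied to any ascending cofinal sequence of eventually constant length, is exactly the description claimed in the second sentence of the lemma. The only delicate point is the pigeonhole choice of a common embedding $\phi$ together with the monotonicity argument showing that unbounded coordinates cannot be matched.
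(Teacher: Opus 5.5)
Your proof is correct and follows essentially the same route as the paper: you pass to a cofinal ascending sequence, use the nonincreasing word lengths to force identity position maps, retain the coordinates with bounded internal length, and prove leastness by fixing a single position map on infinitely many terms and using monotonicity of each coordinate chain. The only difference is presentation: you spell out steps the paper leaves implicit, such as the identity position map for equal lengths and the factorization of $\phi$ through $F$.
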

\begin{proof}
Enumerate a directed set $\mathcal D$ and successively choose in $\mathcal D$ an upper bound of the previous choice and the next enumerated element. The resulting ascending sequence is cofinal. Word lengths are nonincreasing, so a tail has fixed length $\ell$. Write that tail as
\[
 w^k=(s_1^k,\ldots,s_\ell^k),
 ~
 s_i^k\preceq s_i^{k+1}.
\]
A letter coordinate of bounded internal length stabilizes. Retain exactly those coordinates, in their original order, and let $w^\infty$ be the word formed by their stable values. Matching the retained positions shows $w^k\leq w^\infty$.

Let $v$ be another upper bound. There are finitely many increasing position maps that can witness $w^k\leq v$. One map occurs on a cofinal subsequence. Every coordinate used by that map is a prefix of a fixed finite letter of $v$ on the subsequence, and monotonicity makes its internal length bounded on the whole tail. Its stable value still prefixes the corresponding letter of $v$. Restricting the same map to the retained coordinates gives $w^\infty\leq v$. The cofinal sequence and $\mathcal D$ have the same upper bounds. The argument includes $\ell=0$, the case of no retained coordinates, and eventually constant sequences.
\end{proof}

\begin{lemma}\label{word:lem:mc}
The dcpo $\HH$ is meet-continuous. More precisely, if $z\leq\sup\mathcal D$, then an ascending sequence in ${\downarrow} z\cap{\downarrow}\mathcal D$ has supremum $z$.
\end{lemma}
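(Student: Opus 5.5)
The plan is to make the proof of Lemma~\ref{word:lem:sup} constructive and to build the required sequence by \emph{inserting} the non-stabilizing letters of a cofinal chain into $z$. Let $\mathcal D$ be directed with $z\leq\sup\mathcal D$. As in the proof of Lemma~\ref{word:lem:sup}, choose an ascending cofinal sequence in $\mathcal D$. Pass to a tail $w^k=(s_1^k,\ldots,s_\ell^k)$, $k\geq k_0$, of fixed length $\ell$ with $s_i^k\preceq s_i^{k+1}$. Let $P\subseteq\{1,\ldots,\ell\}$ be the set of retained coordinates, those of bounded internal length. By enlarging $k_0$, assume every retained coordinate has already reached its stable value. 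Then $\sup\mathcal D=w^\infty=(s_p^{k_0})_{p\in P}$, listed in increasing order of $p$. Write $P=\{p_1<\cdots<p_n\}$. The complementary coordinates $i\notin P$ have $|s_i^k|\to\infty$.

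Since $z=(z_1,\ldots,z_m)\leq w^\infty$, there are positions $1\leq i_1<\cdots<i_n\leq m$ with $z_{i_j}\preceq s_{p_j}^{k_0}$. Define $y^k$ ($k\geq k_0$) from $z$ by inserting the letters $s_i^k$ for $i\notin P$ as follows. Each block of unretained coordinates with $p_j<i<p_{j+1}$ goes, in its original order, immediately after $z_{i_j}$ and before the next letter of $z$. Here we use the conventions $p_0=0$ with insertion at the very front, and $p_{n+1}=\ell+1$ with insertion at the very end. Every $y^k$ has the same length $m+\ell-n$ and the same shape, namely which positions come from $z$ and which are inserted.

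Three comparisons are then checked. First, $y^k\leq z$: the letters of $z$ occur in $y^k$ as an increasing subsequence, matched to themselves. Second, $y^k\leq w^k$: send coordinate $p_j$ of $w^k$ to the copy of $z_{i_j}$, using $z_{i_j}\preceq s_{p_j}^{k_0}=s_{p_j}^k$. Send each unretained coordinate $i$ to its inserted copy $s_i^k$. The insertion rule makes this position map strictly increasing, because the block between $p_j$ and $p_{j+1}$ sits after $z_{i_j}$ and before $z_{i_{j+1}}$. Third, $y^k\leq y^{k+1}$: use the identity position map, since $s_i^k\preceq s_i^{k+1}$. Thus $(y^k)$ is an ascending sequence in ${\downarrow}z\cap{\downarrow}\mathcal D$.

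Finally, apply the supremum description of Lemma~\ref{word:lem:sup} to $(y^k)$, which has constant length. The coordinates coming from $z$ are constant, hence retained. The inserted coordinates have unbounded internal length, hence are discarded. So $\sup_k y^k=z$, which gives meet-continuity in the form~\eqref{def:mc}, since a directed set lies in a Scott closed set together with its supremum. The only delicate point is the order-compatibility of the insertion, that is, checking that the map witnessing $y^k\leq w^k$ is increasing. Placing each block of unretained coordinates right after the $z$-letter matched to the preceding retained coordinate handles it. The edge cases $n=0$ or $z=\varepsilon$ are covered by the same construction with insertion at the front.
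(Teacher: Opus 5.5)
Your proposal is correct and follows essentially the same route as the paper. Like the paper, you insert the unbounded coordinates of the fixed-length tail $w^k$ into $z$ around the letters matched to the stable coordinates, witness $y^k\leq z$, $y^k\leq w^k$ and $y^k\leq y^{k+1}$ by the obvious position maps, and read off $\sup_k y^k=z$ from Lemma~\ref{word:lem:sup}. The only differences are cosmetic: you place each inserted block before the unmatched letters of $z$ in its interval (the paper places it after), and for $n=0$ you obtain $w^kz$ rather than the paper's $zw^k$; both orderings work.
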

\begin{proof}
Use Lemma~\ref{word:lem:sup}. Discard a finite initial segment so that all stable coordinates have reached their final values, and let their positions in $w^k$ be $i_1<\ldots<i_r$. Fix a matching of $w^\infty=\sup\mathcal D$ into $z$ at positions $j_1<\ldots<j_r$; then $z_{j_t}\preceq w^k_{i_t}$.

Amalgamate the two finite linear orders of positions along these anchors. In every interval before, between or after anchors, first list the unmatched positions of $z$, and then the positions of $w^k$ in that interval. The latter positions are all unbounded coordinates, because every stable coordinate is an anchor. At anchor $t$, output the letter $z_{j_t}$. Denote the resulting word by $e_k$.

The copy of the positions of $z$ gives an increasing embedding witnessing $e_k\leq z$. To witness $e_k\leq w^k$, match every unbounded coordinate to its inserted copy and every stable coordinate to the corresponding anchor, whose output letter is a prefix. The pattern is independent of $k$, so $(e_k)$ is ascending. Only the inserted unbounded coordinates vary, and they disappear at the supremum; all positions copied from $z$ remain in their original order. Lemma~\ref{word:lem:sup} gives $\sup_k e_k=z$. If $r=0$, take the concatenation $e_k=zw^k$.
\end{proof}

\begin{lemma}\label{word:lem:concat}
The concatenation map $(u,v)\mapsto uv$ is monotone and preserves directed suprema of the order product. For a fixed word $u$ and a branch $\beta\in\N^\N$, where $\beta|k$ denotes the initial segment of length $k$,
\begin{equation}\label{word:eq:insertion}
 u\langle\beta|1\rangle\leq u\langle\beta|2\rangle\leq\ldots,
 ~
 \sup_k u\langle\beta|k\rangle=u.
\end{equation}
\end{lemma}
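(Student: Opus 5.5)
Monotonicity comes first. If $u\leq u'$ and $v\leq v'$ are witnessed by increasing position maps $i\colon\{1,\ldots,\ell(u')\}\to\{1,\ldots,\ell(u)\}$ and $j\colon\{1,\ldots,\ell(v')\}\to\{1,\ldots,\ell(v)\}$, then shifting $j$ by $\ell(u)$ and placing it after $i$ gives an increasing map witnessing $uv\leq u'v'$. The prefix conditions are inherited letter by letter. So concatenation is monotone.

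Next I show that concatenation preserves directed suprema. Let $\mathcal D\subseteq\HH^2$ be directed. The coordinatewise order makes $\sup\mathcal D=(\sup\pi_1\mathcal D,\sup\pi_2\mathcal D)$, and by the cofinal-sequence argument of Lemma~\ref{word:lem:sup} I may replace $\mathcal D$ by an ascending cofinal sequence $(u^k,v^k)$. Word lengths are nonincreasing, so on a tail $\ell(u^k)=\ell_1$ and $\ell(v^k)=\ell_2$ are constant. The position maps from $u^{k}\leq u^{k+1}$ are then forced to be identities, and the same holds for $v$. Hence $u^kv^k\leq u^{k+1}v^{k+1}$ is witnessed coordinatewise by the identity on $\ell_1+\ell_2$ positions. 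The ascending sequence $(u^kv^k)$ thus has eventually constant length with coordinatewise prefix-increasing letters.

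Now I apply the explicit description in Lemma~\ref{word:lem:sup}. The supremum of $(u^kv^k)$ keeps the stabilizing coordinates in order. The stabilizing coordinates of $u^kv^k$ are exactly those of $u^k$, followed by those of $v^k$ shifted by $\ell_1$. Therefore $\sup_k u^kv^k=(\sup_k u^k)(\sup_k v^k)$. By monotonicity this sequence is cofinal in the image of $\mathcal D$, so this is the supremum of the image. The one delicate point is that the description in Lemma~\ref{word:lem:sup} needs an ascending cofinal sequence with eventually constant length and identity position maps. The rigidity observation above, that equal lengths force identity maps, is what supplies this. It should be justified by noting that an increasing map between sets of the same finite size is the identity.

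For \eqref{word:eq:insertion}, I take the sequence $w^k=u\langle\beta|k\rangle$. Each has length $\ell(u)+1$. The identity map witnesses $w^k\leq w^{k+1}$, since the letters of $u$ are unchanged and $\beta|k\preceq\beta|(k+1)$. The last coordinate has internal length $k\to\infty$, so it does not stabilize, while all coordinates of $u$ are constant. Lemma~\ref{word:lem:sup} then gives the supremum $u$. Alternatively, the sequence $\langle\beta|k\rangle$ has supremum $\varepsilon$ by the same lemma with no retained coordinates, and preservation of suprema gives $\sup_k u\langle\beta|k\rangle=u\varepsilon=u$.
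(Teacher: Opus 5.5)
Your proof is correct and takes the same route as the paper. You concatenate matching maps for monotonicity, pass to an ascending cofinal sequence of pairs with eventually constant lengths, observe that retaining stabilizing coordinates commutes with concatenation, and read off \eqref{word:eq:insertion} as the one-transient-coordinate case of Lemma~\ref{word:lem:sup}. Your explicit remark that equal lengths force identity position maps fills in a detail the paper leaves implicit.
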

\begin{proof}
Matching maps concatenate. For directed suprema, choose a cofinal sequence on which both word lengths are constant. Coordinatewise retention and deletion commute with concatenation. Formula~\eqref{word:eq:insertion} is the one-transient-coordinate case of Lemma~\ref{word:lem:sup}.
\end{proof}

\subsection{Clopen labels with universal nowhere dense residuals}\label{word:subsec:labels}
The labels are designed so that an indefinitely extended letter leaves only a nowhere dense residual set, while every closed nowhere dense set can be retained along a suitable branch. The first property will make the covering language Scott closed; the second will be used to rule out ordinary open rectangles.

For a finite binary string $b$, let $[b]\subseteq X$ be its cylinder. Enumerate all basic cylinders as $(B_n)_{n\geq0}$, with $B_0=X$, and enumerate all clopen subsets as $(V_j)_{j\geq0}$, allowing repetitions. Choose the enumeration so that $V_0=[0]$ and $V_1=[1]$.

For every finite natural-number string, including the recursion root, write $s^\frown j$ for the string obtained by appending $j$ to $s$, and define
\begin{equation}\label{word:eq:labels}
 K_\emptyset=X,
 ~
 K_{s^\frown j}=\begin{cases}
 K_s\cap V_j,&B_{|s|}\not\subseteq K_s\cap V_j,\\
 \emptyset,&B_{|s|}\subseteq K_s\cap V_j.
 \end{cases}
\end{equation}
The labels are clopen, and $s\preceq t$ implies $K_t\subseteq K_s$. Every label indexed by a nonempty string is proper because the first step excludes $B_0=X$. Empty labels are allowed; the corresponding letters remain in $\HH$.

\begin{lemma}\label{word:lem:labels}
The labels have the following properties.
\begin{enumerate}[\rm (1)]
\item For every $\alpha\in\N^\N$, the set
      $\mathcal N_\alpha=\bigcap_{k\geq1}K_{\alpha|k}$ is closed nowhere dense, possibly empty.
\item For every closed nowhere dense set $\mathcal N\subseteq X$, some $\alpha\in\N^\N$ satisfies
      $\mathcal N\subseteq K_{\alpha|k}$ for all $k\geq1$.
\item For every nonempty basic cylinder $B$, some $\beta\in\N^\N$ satisfies
      $\emptyset\neq K_{\beta|k}\subseteq B$ for every $k\geq1$.
\end{enumerate}
\end{lemma}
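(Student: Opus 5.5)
The three properties follow directly from the recursion \eqref{word:eq:labels}. Its second case only ever produces the empty label. So whenever we want a branch that keeps a given set, we will pick a letter $j$ that lands in the first case. The enumeration $(V_j)$ lists every clopen set, so any clopen set we need is available as some $V_j$.

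For (1), fix $\alpha$ and $n\geq0$, and look at the step from $s=\alpha|n$, of length $n$, to $\alpha|n+1=s^\frown\alpha_{n+1}$. In the first case of \eqref{word:eq:labels} we have $B_n\not\subseteq K_{\alpha|n+1}$ by definition. In the second case $K_{\alpha|n+1}=\emptyset$, and again $B_n\not\subseteq K_{\alpha|n+1}$ because basic cylinders are nonempty. Hence no basic cylinder $B_n$ is contained in $\mathcal N_\alpha\subseteq K_{\alpha|n+1}$, so $\mathcal N_\alpha$ has empty interior. It is closed as an intersection of clopen sets.

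For (2), build $\alpha$ recursively, keeping the invariant $\mathcal N\subseteq K_{\alpha|k}$; it holds at the root since $K_\emptyset=X$. Suppose $s=\alpha|k$ satisfies $\mathcal N\subseteq K_s$. Since $\mathcal N$ is closed nowhere dense and $B_k$ is a nonempty open set, there is a point $x\in B_k\setminus\mathcal N$. Choose a cylinder $C\ni x$ disjoint from the closed set $\mathcal N$, and let $V_j=X\setminus C$, which is clopen and contains $\mathcal N$. Then $\mathcal N\subseteq K_s\cap V_j$, while $x\in B_k\setminus V_j$ gives $B_k\not\subseteq K_s\cap V_j$. So the first case applies, and $K_{s^\frown j}=K_s\cap V_j\supseteq\mathcal N$. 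Set $\alpha_{k+1}=j$.

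For (3), write $B=[b]$, and again build $\beta$ recursively, now with the invariant $\emptyset\neq K_{\beta|k}\subseteq B$ for $k\geq1$. For the first letter, choose $j$ with $V_j=[b0]$. This is a proper nonempty subset of $B$, and in particular $B_0=X\not\subseteq V_j$, so $K_{\langle j\rangle}=[b0]$. At a later stage, $K_s$ is a nonempty clopen subset of $B$ and $|s|=k$. Pick $y\in K_s$ and a cylinder $C$ with $y\in C\subseteq K_s$, taken of length greater than the length of $B_k$'s defining string. Then $C$ is strictly smaller than $B_k$, so $B_k\not\subseteq C$. Choosing $V_j=C$ gives $K_s\cap V_j=C$, the first case applies, and $K_{s^\frown j}=C$, which is nonempty and contained in $B$.

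There is no serious obstacle. The one point needing care is that the second case of \eqref{word:eq:labels} must be avoided in (2) and (3). This is exactly what the choices of $x$ and of the small cylinder $C$ guarantee.
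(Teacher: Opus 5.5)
Your proof is correct, and for (1) and (2) it is essentially the paper's argument: your $K_s\cap(X\setminus C)$ is the paper's $W=K_s\setminus B'$, and your explicit treatment of the empty-label case in (1) is a welcome detail. For (3), the paper reduces to (2) by retaining a singleton $\{x\}\subseteq W\subseteq B$, which is nowhere dense because Cantor space has no isolated points; you instead pass directly through nested cylinders whose defining strings are longer than the one defining $B_k$, which is equally valid and self-contained.
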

\begin{proof}
For (1), $B_n$ is not contained in $K_{\alpha|(n+1)}$. Since that label is closed, its complement meets $B_n$ in a nonempty open set. Hence a subcylinder of $B_n$ is disjoint from $\mathcal N_\alpha$. Thus $\mathcal N_\alpha$ contains no nonempty basic cylinder and is closed.

For (2), suppose $\mathcal N\subseteq K_s$ and $|s|=n$. Choose a nonempty basic clopen cylinder $B^\prime\subseteq B_n\setminus\mathcal N$ and put $W=K_s\setminus B^\prime$. Then $\mathcal N\subseteq W\subseteq K_s$ and $B_n\not\subseteq W$. Choose $j$ with $V_j=W$. Formula~\eqref{word:eq:labels} gives $K_{s^\frown j}=W$. Iteration constructs the required branch, including the case $\mathcal N=\emptyset$.

For (3), choose $x\in B$ and a proper clopen set $W$ with $x\in W\subseteq B$. Use $W$ as the first label, and then continue the construction in (2) while retaining the singleton $\{x\}$. A singleton is closed nowhere dense because Cantor space has no isolated points.
\end{proof}
Fix a branch $\beta_n$ as in part (3) for each $B_n$. These choices are made before any Scott neighbourhoods are considered.

\subsection{A Scott closed covering language}
Define
\begin{equation}\label{word:eq:cover}
 \mathcal C=\{w\in\HH:K(w)=X\},
 ~
 \mathcal O=\HH\setminus\mathcal C.
\end{equation}
The word $\langle(0),(1)\rangle$ belongs to $\mathcal C$, whereas $\varepsilon\notin\mathcal C$. If $u\leq v$, each label in $v$ is contained in the label of its matched source letter, so $K(u)\supseteq K(v)$. Hence $\mathcal C$ is lower.

\begin{lemma}\label{word:lem:closed}
The set $\mathcal C$ is Scott closed. Consequently, $\mathcal O$ is a Scott open neighbourhood of $\varepsilon$.
\end{lemma}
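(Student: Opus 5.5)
We must show that $\mathcal C$ is closed under directed suprema; lowerness was checked above. The consequence is then immediate, since $\varepsilon\notin\mathcal C$ and the complement of a Scott closed set is Scott open.

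Let $\mathcal D\subseteq\mathcal C$ be directed. By Lemma~\ref{word:lem:sup} we may pass to an ascending cofinal sequence $w^k=(s_1^k,\ldots,s_\ell^k)$ of fixed length $\ell$. Its supremum $w^\infty$ keeps exactly the stable coordinates $i\in S$, with final letters $s_i^\infty$. Discard an initial segment so that the stable coordinates have reached their final values. Each unstable coordinate $i\notin S$ has internal length tending to infinity, and its letters form a $\preceq$-chain. Hence there is a branch $\alpha_i\in\N^\N$ with $s_i^k=\alpha_i|n_k$ for some $n_k\to\infty$.

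The labels decrease along prefixes, so
\[
 K(w^k)=\bigcup_{i\in S}K_{s_i^\infty}\ \cup\ \bigcup_{i\notin S}K_{\alpha_i|n_k}.
\]
Each of these sets equals $X$, because $w^k\in\mathcal C$. The second union is a finite union of decreasing sequences of closed sets indexed by $k$. In a compact space, the intersection over $k$ of a decreasing sequence of finite unions $\bigcup_i F_i^k$ equals $\bigcup_i\bigcap_k F_i^k$; indeed a point outside every $\bigcap_k F_i^k$ escapes each $F_i^k$ from some stage on. Taking the intersection over $k$ therefore gives
\[
 X=K(w^\infty)\cup\bigcup_{i\notin S}\mathcal N_{\alpha_i}.
\]
By Lemma~\ref{word:lem:labels}(1), each $\mathcal N_{\alpha_i}$ is closed nowhere dense. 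A finite union of closed nowhere dense sets is again closed nowhere dense. The set $K(w^\infty)$ is clopen, so its complement is an open set contained in a nowhere dense set, hence empty. Therefore $K(w^\infty)=X$, that is, $\sup\mathcal D=w^\infty\in\mathcal C$. The case with no unstable coordinates is trivial, and when $\ell=0$ the conditions $w^k\in\mathcal C$ and $K(\varepsilon)=\emptyset$ are incompatible, so this case does not occur.

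The main obstacle is the passage to the limit. One has to use the design of the labels, namely that branch residuals are nowhere dense, and exchange the intersection over $k$ with the finite union. Both steps are handled above: monotonicity of each chain gives the exchange, and clopenness of the retained part lets the nowhere dense residuals be absorbed.
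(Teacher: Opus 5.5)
Your proof is correct and follows the paper's argument essentially step for step: you take a fixed-length ascending cofinal sequence, exchange the decreasing intersection with the finite union of coordinate labels, identify the unbounded coordinates with the nowhere dense residuals $\mathcal N_{\alpha_i}$, and absorb these into the clopen retained part $K(w^\infty)$. Two cosmetic points: the exchange is purely set-theoretic, since your own justification never uses compactness, and the index $n_k$ should depend on the coordinate $i$.
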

\begin{proof}
Take an eventually fixed-length cofinal sequence
$w^k=(s_1^k,\ldots,s_\ell^k)$ in a directed subset of $\mathcal C$. The labels decrease coordinatewise, and every word $w^k$ satisfies $K(w^k)=X$. Thus
\begin{equation}\label{word:eq:finiteinter}
 X=\bigcup_{i=1}^{\ell}\bigcap_kK_{s_i^k}.
\end{equation}
Indeed, for each $x\in X$, the sets $\{i:x\in K_{s_i^k}\}$ form a decreasing sequence of nonempty subsets of the finite set $\{1,\ldots,\ell\}$, and therefore have nonempty intersection.

Stable coordinates leave clopen labels. Each unbounded coordinate is cofinal among the prefixes of a branch $\alpha$, so its label intersection is the nowhere dense set $\mathcal N_\alpha$ from Lemma~\ref{word:lem:labels}(1). Let $K_{\mathrm{st}}$ be the finite union of the stable labels. If $X\setminus K_{\mathrm{st}}$ were nonempty, it would be a nonempty open set covered by finitely many closed nowhere dense sets. Successively choosing a smaller nonempty open set outside each of them gives a contradiction. Hence $K_{\mathrm{st}}=X$. The supremum word retains precisely the stable letters and therefore belongs to $\mathcal C$. This also excludes an all-unbounded directed sequence in $\mathcal C$.
\end{proof}

\subsection{Failure of ordinary product openness}
By Lemma~\ref{word:lem:concat},
\[
 \Omega=\{(u,v)\in\HH^2:uv\in\mathcal O\}
\]
is Scott open in the order square, and $(\varepsilon,\varepsilon)\in\Omega$. Let $V,W\in\sigma(\HH)$ contain $\varepsilon$. Begin with $u_{-1}=\varepsilon$. Once $u_{n-1}\in V$ is chosen, the chain in \eqref{word:eq:insertion} with $\beta=\beta_n$ has supremum $u_{n-1}$. Scott openness gives $k_n\geq1$ such that
\begin{equation}\label{word:eq:left}
 s_n=\beta_n|k_n,
 ~
 u_n=u_{n-1}\langle s_n\rangle\in V.
\end{equation}
Thus every word $u_n=\langle s_0,\ldots,s_n\rangle$ belongs to the same open set $V$, and
$\emptyset\neq K_{s_n}\subseteq B_n$. Put
\[
 \mathcal N=X\setminus\bigcup_{n\geq0}K_{s_n}.
\]
Every basic cylinder contains one of the nonempty labels, so $X\setminus\mathcal N$ is dense open and $\mathcal N$ is closed nowhere dense. Lemma~\ref{word:lem:labels}(2) gives $\alpha\in\N^\N$ with
$\mathcal N\subseteq K_{\alpha|k}$ for all $k$.

The one-letter chain $\langle\alpha|k\rangle$ has supremum $\varepsilon$. Scott openness of $W$ supplies $k\geq1$ with $v=\langle t\rangle\in W$, where $t=\alpha|k$. Since $K_t$ is clopen, $X\setminus K_t$ is a compact subset of $X\setminus\mathcal N$. Choose a finite set $M\subseteq\N$ such that
\[
 X\setminus K_t\subseteq\bigcup_{n\in M}K_{s_n}.
\]
Take $m\geq\max M$, with $m=0$ if $M=\emptyset$. Then
\begin{equation}\label{word:eq:rectangle}
 K(u_m)\cup K_t=X,
 ~
 (u_m,v)\in(V\times W)\setminus\Omega.
\end{equation}
The argument also covers $\mathcal N=\emptyset$. Thus no ordinary product neighbourhood of $(\varepsilon,\varepsilon)$ is contained in $\Omega$. The reverse topology inclusion follows from Scott continuity of the coordinate projections, completing Theorem~\ref{word:thm:main}.

\begin{remark}[Two sequence directions]
For fixed $n$, the insertion sequence used to choose $s_n$ is ascending and has supremum $u_{n-1}$. Across $n$, the selected words $u_n$ are descending. No directed supremum of that descending sequence is used. Compactness is applied in the Cantor space $X$, where the finite unions $K(u_n)$ are increasing.
\end{remark}

For the open set $\mathcal O$ in \eqref{word:eq:cover},
\[
 (u,v)\in\Omega
 ~\Longleftrightarrow~
 \text{some }z\in\mathcal O\text{ satisfies }z\leq u,v.
\]
Indeed, if $(u,v)\in\Omega$, the concatenation $uv$ is such a common lower bound. Conversely, $z\leq u,v$ implies $K(z)\supseteq K(u)\cup K(v)$, so $K(u)\cup K(v)=X$ would force $z\notin\mathcal O$. Thus $\Omega=\mathrm{LB}_{\mathcal O}$.

The set $\Omega$ has empty interior in the ordinary product topology, and its complement is dense there, although the complement is Scott closed in the order square. Every nonempty Scott open subset of $\HH$ contains the greatest element $\varepsilon$; hence every nonempty set open in the ordinary product topology contains $(\varepsilon,\varepsilon)$ and cannot be contained in $\Omega$.

\subsection{Finite generation of common upper bounds}
The well-filteredness of the word example follows from a different feature of its order.

\begin{lemma}\label{word:finite-upper-bounds}
For all $u,v\in\HH$, there is a finite set $F(u,v)\subseteq\HH$ such that
\[
 {\uparrow} u\cap{\uparrow} v={\uparrow} F(u,v).
\]
\end{lemma}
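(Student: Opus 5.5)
We describe upper bounds explicitly. In the order \eqref{word:eq:order}, $u\leq w$ means that $w$ is obtained from $u$ by deleting some letters and then extending each remaining letter to a longer string. Upper bounds are therefore short: $\ell(w)\leq\min\{\ell(u),\ell(v)\}$. The difficulty is that letters of $w$ can be arbitrarily long, so the set of common upper bounds is infinite. The plan is to show that every common upper bound lies above one of finitely many \emph{minimal shapes}.

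Take $w=(w_1,\ldots,w_n)\in{\uparrow}u\cap{\uparrow}v$. Fix witnessing increasing maps $i_1<\cdots<i_n$ into $u$ and $j_1<\cdots<j_n$ into $v$, so that $u_{i_t}\preceq w_t$ and $v_{j_t}\preceq w_t$. Two prefixes of the same string are comparable under $\preceq$. Hence, for each $t$, the letters $u_{i_t}$ and $v_{j_t}$ are comparable, and we put $c_t=u_{i_t}\vee v_{j_t}$, the longer of the two. Define
\[
 g=(c_1,\ldots,c_n).
\]
Then $g\in{\uparrow}u\cap{\uparrow}v$ via the same position maps, and $g\leq w$ via the identity position map, since $c_t\preceq w_t$.

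Let $F(u,v)$ be the set of all words $g$ obtained this way. Each $g$ is determined by three data: a length $n\leq\min\{\ell(u),\ell(v)\}$, an increasing map into the positions of $u$, and an increasing map into the positions of $v$, subject to the requirement that the paired letters be $\preceq$-comparable. There are only finitely many such data, so $F(u,v)$ is finite.

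The inclusion ${\uparrow}F(u,v)\subseteq{\uparrow}u\cap{\uparrow}v$ holds because each $g\in F(u,v)$ is a common upper bound. The reverse inclusion ${\uparrow}u\cap{\uparrow}v\subseteq{\uparrow}F(u,v)$ is the construction above. Both directions only require composing position maps with transitivity of $\preceq$, using the partial-order facts already noted for $\HH$.

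There is no real obstacle. The one point that needs care is the observation that two prefixes of a common letter are comparable, which makes $c_t$ well defined. One should also check the degenerate cases $n=0$, where $g=\varepsilon$, and $u=\varepsilon$ or $v=\varepsilon$, where the only common upper bound is $\varepsilon$ and $F(u,v)=\{\varepsilon\}$.
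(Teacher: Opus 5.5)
Your proposal is correct and is essentially the paper's proof. Both take $F(u,v)$ to be the words built from pairs of increasing index lists whose matched letters are prefix-comparable, keeping the longer letter at each position. A common upper bound $w$ then lies above the word read off from its own two witnessing matchings, because two prefixes of $w_t$ are comparable and the longer one is still a prefix of $w_t$.
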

\begin{proof}
Write $u=(u_1,\ldots,u_m)$ and $v=(v_1,\ldots,v_n)$. For $0\leq k\leq\min(m,n)$ choose increasing index lists $i_1<\cdots<i_k$ and $j_1<\cdots<j_k$ such that $u_{i_\ell}$ and $v_{j_\ell}$ are prefix comparable for every $\ell$. Let $c_\ell$ be the longer of these two letters and put $c=(c_1,\ldots,c_k)$. The case $k=0$ gives $c=\varepsilon$. There are only finitely many such choices; let $F(u,v)$ consist of their resulting words. Each resulting $c$ is a common upper bound of $u,v$ by~\eqref{word:eq:order}.

Conversely, if $w=(w_1,\ldots,w_k)$ is a common upper bound, choose the two increasing matching maps witnessing $u\leq w$ and $v\leq w$. The matched letters at each position are both prefixes of $w_\ell$, and therefore are prefix comparable. Their longer letter is still a prefix of $w_\ell$. The word $c$ obtained from these matches belongs to $F(u,v)$ and satisfies $c\leq w$. This proves the equality.
\end{proof}

\begin{proposition}\label{word:wellfiltered}
The Scott space $\Sigma\HH$ is a strong $d$-space, well-filtered and coherent.
\end{proposition}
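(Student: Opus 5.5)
The plan is to use Lemma~\ref{word:finite-upper-bounds} as the single structural input and derive all three properties from it. For a dcpo $P$, the strong $d$-space property in~\cite{LXLower} is the condition that, whenever $\bigcap_{d\in D}{\uparrow}x_d\cap{\uparrow}F\subseteq U$ for a directed family $(x_d)_{d\in D}$, a finite set $F$ and a Scott open set $U$, some $d$ already satisfies ${\uparrow}x_d\cap{\uparrow}F\subseteq U$. By~\cite[Corollary~64]{LXLower}, every strong $d$-space is well-filtered, as the introduction indicates. So the proof reduces to two tasks: proving the strong $d$-space property and proving coherence.

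\textbf{Strong $d$-space property.} Note first that $\bigcap_d{\uparrow}x_d={\uparrow}\sup_d x_d$, because the upper bounds of a directed family are exactly the elements above its supremum. Hence the hypothesis reads ${\uparrow}x\cap{\uparrow}F\subseteq U$ with $x=\sup_d x_d$. By Lemma~\ref{word:finite-upper-bounds}, applied to each $f\in F$,
\[
 {\uparrow}x\cap{\uparrow}F={\uparrow}G,\qquad G=\bigcup_{f\in F}F(x,f)\text{ finite}.
\]
Thus every $g\in G$ lies in $U$, and it suffices to find one $d$ with ${\uparrow}x_d\cap{\uparrow}f\subseteq U$ for each $f\in F$.

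Fix $f$. I will use the explicit description of $F(x_d,f)$ from the proof of Lemma~\ref{word:finite-upper-bounds}: its members come from partial matchings of prefix-comparable letters, taking the longer letter at each matched pair. Pass to a cofinal ascending sequence $x^k$ of fixed length, as in Lemma~\ref{word:lem:sup}. Any word $c_k\in F(x^k,f)$ is produced by one of finitely many matching patterns. Letters of $x^k$ in unbounded coordinates eventually become longer than every letter of $f$. Such a letter is prefix comparable with a letter of $f$ only by extending it, which would give $c_k$ arbitrarily long letters at that position.

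\textbf{Main obstacle.} The hard step is to show that the elements of $F(x^k,f)$ eventually lie in $U$. I will argue by contradiction. Suppose that for infinitely many $k$ some $c_k\in F(x^k,f)$ lies outside $U$. Pass to a subsequence with a fixed matching pattern. The words $c_k$ then form an ascending sequence, since the matched letters grow with $x^k$. Its supremum, computed by Lemma~\ref{word:lem:sup}, deletes the positions fed by unbounded coordinates. What remains is the word obtained from the same pattern restricted to the stable coordinates, that is, a member of $F(x,f)\subseteq U$. Scott openness of $U$ then puts some $c_k$ in $U$, a contradiction.

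The delicate points in this step are the following. The ascending order of $c_k$ must be checked from~\eqref{word:eq:order}. The restricted pattern must still be a valid matching for $x$. Every upper bound of $x^k$ and $f$ lies above some element of $F(x^k,f)$, so controlling $F(x^k,f)$ suffices. Directedness then lets us choose one $k$ that works for all $f\in F$ simultaneously.

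\textbf{Coherence.} Since $\varepsilon$ is greatest, every nonempty upper set contains $\varepsilon$, so every nonempty Scott open set contains $\varepsilon$. Consequently every upper set $A$ is compact: any open cover of $A$ has a member containing $\varepsilon$, and that member is an upper set containing $\varepsilon$, so it contains all of $A$. In particular, the intersection of two compact saturated sets is an upper set. It is either empty or contains $\varepsilon$, and in both cases it is compact. This gives coherence directly, without using Lemma~\ref{word:finite-upper-bounds}.
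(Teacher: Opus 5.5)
\textbf{Verdict: the strong $d$-space and well-filteredness parts are correct, but the coherence argument has a genuine gap.}

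\textbf{Strong $d$-space.} Your argument is correct and takes a different route from the paper. You fix a matching pattern along a subsequence, observe that the words $c_k$ ascend, and identify their supremum as the restricted pattern, which lies in $F(\sup D,f)\subseteq U$. This is a direct, Rudin-free computation specific to $\HH$. The paper instead proves the property for every dcpo in which all ${\uparrow}x\cap{\uparrow}y$ are finitely generated, using Rudin's lemma. Your finite-$F$ formulation is equivalent to the usual single-point one by directedness.

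One small caveat concerns your citation. The Lawson--Xu corollary is about Scott spaces of dcpos. It is not true that every strong $d$-space is well-filtered: an infinite set with the cofinite topology is a strong $d$-space but not well-filtered. Since you only apply it to $\Sigma\HH$, this does not affect your argument.

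\textbf{Coherence.} Here the argument fails. You conclude that an open set containing the top element $\varepsilon$ contains every upper set $A$. This reverses the least-element argument. The fact that the greatest element lies in every nonempty open set makes $\Sigma\HH$ irreducible, not compact. For example, $\mathcal O$ contains $\varepsilon$ but not $\langle(0),(1)\rangle$, which belongs to the compact upper set ${\uparrow}\langle(0),(1)\rangle$.

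The conclusion is also false. Let $w_n$ be the word consisting of $n$ copies of the letter $(0)$.
\begin{itemize}
\item Each ${\downarrow}w_n$ is Scott closed, so each $\HH\setminus{\downarrow}w_n$ is Scott open.
\item These sets cover $\HH$: a word lying below every $w_n$ would need at least $n$ letters for every $n$.
\item No finitely many of them cover $\HH$: $w_N\leq w_n$ whenever $n\leq N$, so $w_N$ is missed by the sets with $n\leq N$.
\end{itemize}
So the nonempty upper set $\HH$ is not compact. Your claim that every intersection of two compact saturated sets is automatically compact is therefore unsupported.

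\textbf{What is needed.} Your remark that coherence follows ``without using Lemma~\ref{word:finite-upper-bounds}'' is exactly where things go wrong. That lemma shows each ${\uparrow}u\cap{\uparrow}v$ is a finite union of principal filters, hence compact. The coherence criterion of Jia, Jung and Li says a well-filtered dcpo is coherent once all such binary intersections are compact. Applying it with the well-filteredness you have already established gives coherence, as in the paper.
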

\begin{proof}
We first recall a useful consequence of the order-theoretic Rudin lemma. Let $P$ be a dcpo such that every ${\uparrow} x\cap{\uparrow} y$ is a finitely generated upper set, allowing the empty set. Then, for every directed $D\subseteq P$, every $x\in P$ and every $U\in\sigma(P)$,
\[
 {\uparrow}\sup D\cap{\uparrow} x\subseteq U
 ~\Longrightarrow~
 {\uparrow} d\cap{\uparrow} x\subseteq U
 \text{ for some }d\in D.
\]
This is the strong $d$-space condition for a Scott space~\cite{XZ,LXLower}. To prove it, suppose that every $K_d={\uparrow} d\cap{\uparrow} x$ meets the lower set $C=P\setminus U$. These $K_d$ form a filtered family of nonempty finitely generated upper sets. Rudin's lemma, in the form of~\cite[Lemma~2.3]{LXCompact}, gives a directed set $E\subseteq C$ such that ${\downarrow} E$ meets every $K_d$. Since $K_d$ is upper, $E$ itself meets every $K_d$. Thus $e=\sup E$ belongs to $C$, bounds $D$ and also bounds $x$. This contradicts ${\uparrow}\sup D\cap{\uparrow} x\subseteq U$.

Apply this observation to $\HH$ using Lemma~\ref{word:finite-upper-bounds}. By Lawson and Xu~\cite[Corollary~64]{LXLower}, the Scott space of a dcpo is well-filtered whenever it is a strong $d$-space. Hence $\Sigma\HH$ is well-filtered. Each ${\uparrow} u\cap{\uparrow} v$ is compact, being a finite union of principal upper sets. The coherence criterion for well-filtered dcpos of Jia, Jung and Li~\cite[Lemma~3.1]{JJLCoherence} now gives coherence.
\end{proof}

Following Xu, Yang and Chen~\cite{XXC}, a $T_0$ space $X$ is a \emph{strong $R$-space} if, whenever a nonempty family $\{K_i:i\in I\}\subseteq\mathcal Q(X)$ and an open set $U$ satisfy $\bigcap_{i\in I}K_i\subseteq U$, there is a nonempty finite $F\subseteq I$ with $\bigcap_{i\in F}K_i\subseteq U$. A space is \emph{strongly well-filtered} if, for a filtered family $\mathcal K\subseteq\mathcal Q(X)$, a fixed $K\in\mathcal Q(X)$ and an open $U$, the inclusion $K\cap\bigcap\mathcal K\subseteq U$ implies $K\cap K_0\subseteq U$ for some $K_0\in\mathcal K$.

\begin{corollary}\label{word:strong-R}
The Scott space $\Sigma\HH$ is a strong $R$-space and hence strongly well-filtered.
\end{corollary}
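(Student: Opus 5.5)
We prove that $\Sigma\HH$ is a strong $R$-space; strong well-filteredness then follows formally. Indeed, given filtered $\mathcal K$, $K\in\mathcal Q(\Sigma\HH)$ and $U$ with $K\cap\bigcap\mathcal K\subseteq U$, apply the strong $R$-property to the family $\mathcal K\cup\{K\}$. This yields finitely many $K_1,\dots,K_r\in\mathcal K$ with $K\cap K_1\cap\dots\cap K_r\subseteq U$; if $K$ is not among the chosen indices, adjoining it only shrinks the intersection. Filteredness then gives $K_0\in\mathcal K$ below all the $K_i$, so $K\cap K_0\subseteq U$.

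For the strong $R$-property, the plan is to reduce to the well-filtered case by closing the family under finite intersections. Let $\{K_i:i\in I\}\subseteq\mathcal Q(\Sigma\HH)$ be nonempty with $\bigcap_i K_i\subseteq U$. For each nonempty finite $F\subseteq I$, put $K_F=\bigcap_{i\in F}K_i$. By coherence (Proposition~\ref{word:wellfiltered}) and induction on $|F|$, each $K_F$ is compact saturated. The family $\{K_F\}$ is filtered under reverse inclusion, since $K_{F\cup F'}\subseteq K_F\cap K_{F'}$, and its intersection is $\bigcap_iK_i\subseteq U$.

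The one subtlety is that $\mathcal Q$ consists of \emph{nonempty} compact saturated sets, while some $K_F$ may be empty. If some $K_F=\emptyset$, then $K_F\subseteq U$ and we are done with that $F$. Otherwise every $K_F$ is nonempty, so $\{K_F\}\subseteq\mathcal Q(\Sigma\HH)$ is a filtered family. Well-filteredness (Proposition~\ref{word:wellfiltered}) then gives some $K_F\subseteq U$, which is exactly the required finite subfamily.

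The main point to check is that coherence, as cited from Jia, Jung and Li, yields compactness of intersections of arbitrary compact saturated sets, not merely of principal upper sets. Coherence as defined in Section~\ref{sec:relations} is exactly this statement, so only the empty case needs the separate treatment above. No further order-theoretic work on $\HH$ appears necessary.
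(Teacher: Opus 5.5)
Your proposal is correct and follows the paper's proof: you close the family under finite intersections using coherence, treat an empty finite intersection separately, apply the well-filteredness from Proposition~\ref{word:wellfiltered}, and then obtain strong well-filteredness by applying the strong $R$-property to $\mathcal K\cup\{K\}$ and using filteredness to find a single $K_0$ below the selected members. The one case you leave implicit is when no member of $\mathcal K$ is selected ($r=0$); there $K\subseteq U$ already holds, and since a filtered family is nonempty, any $K_0\in\mathcal K$ works.
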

\begin{proof}
Every coherent well-filtered space is a strong $R$-space~\cite{XXC}. Indeed, the nonempty finite intersections of a family of compact saturated sets form a filtered family of compact saturated sets, unless some finite intersection is empty, in which case the desired finite conclusion is already available. Apply well-filteredness to this family. Every strong $R$-space is strongly well-filtered: apply its defining property to a filtered family together with the fixed compact saturated set, and then choose a member of the filtered family contained in the finitely many selected members. Proposition~\ref{word:wellfiltered} now gives both assertions for $\Sigma\HH$.
\end{proof}

Together with Theorem~\ref{word:thm:main}, this shows that coherence and well-filteredness do not imply the square Scott product equality, even for countable meet-continuous dcpos with a greatest element. The same counterexample is a strong $R$-space by Corollary~\ref{word:strong-R}. In particular, $\HH$ has no countable family of directed subsets complete for Scott openness: such a family would imply the square product equality by~\cite[Proposition~2.4(2)]{Xu}.

\section{Local lattice structure and Scott sobriety}\label{sec:local}
Local lattice operations allow us to deduce sobriety from the square Scott product equality. We first prove this implication without a countability assumption, then apply it to countable local lattice classes and to Jia's core-compactness questions. The final subsection treats well-filteredness through closed Rudin sets.

\subsection{Local joins and the conditional sobriety theorem}\label{local:subsec:joins}
A dcpo $P$ is called an \emph{$L$-dcpo} if every principal ideal
\[
 P_r={\downarrow} r
\]
is a complete lattice in its induced order; see Jia~\cite[Section~4.4]{Jia} and Xu~\cite[Definition~3.1]{Xu}. Write $\wedge_r$ and $\vee_r$ for the operations in $P_r$. A local join may depend on the chosen upper bound $r$; compatible elements are not assumed to have a global join.

Every principal ideal is a sub-dcpo with the subspace Scott topology, and a dcpo is meet-continuous if and only if all of its principal ideals are meet-continuous~\cite[Lemma~3.2]{Xu}. We also use the following elementary extension fact. If $P$ is meet-continuous and $U$ is Scott open in $P_r$, then ${\uparrow}_P U$ is Scott open in $P$. To see this, choose an ambient Scott open set $O$ with $O\cap P_r=U$. If $D$ is directed and $u\in U$ satisfies $u\leq\sup D$, meet-continuity gives
\[
 z\in O\cap{\downarrow} u\cap{\downarrow}D.
\]
Then $z\in U$, and some member of $D$ lies in ${\uparrow}_P U$.

In a meet-continuous complete lattice $L$, binary meet preserves directed suprema on the order square. For a directed set $D\subseteq L^2$,
\[
 \left(\sup_{e\in D}e_1\right)\wedge
 \left(\sup_{e\in D}e_2\right)
 =\sup_{e,d\in D}(e_1\wedge d_2)
 =\sup_{e\in D}(e_1\wedge e_2).
\]
The first equality uses separate meet-continuity twice; the second uses a common upper index in $D$. Binary join preserves directed suprema on the order square of every complete lattice.

\begin{lemma}\label{local:separate}
Let $P$ be a meet-continuous $L$-dcpo, and let $a\neq b$ be minimal upper bounds of $\{x,y\}$. If the Scott topology of $P_a\times P_b$ is its ordinary product topology, then $a$ and $b$ have disjoint Scott open neighbourhoods in $P$.
\end{lemma}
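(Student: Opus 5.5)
Our plan is to separate $a$ and $b$ using the local join map on $P_a\times P_b$. Consider
\[
 j\colon P_a\times P_b\to P_a,\qquad j(u,v)=u\wedge_a(\text{something})
\]
— rather, use the relation "common upper bound below $a$". Define
\[
 W=\{(u,v)\in P_a\times P_b:\ u\vee_a (v\wedge_b x')\ \ldots\}
\]
This is too ad hoc. We use the more direct relation instead:
\[
 \Theta=\{(u,v)\in P_a\times P_b:\ \text{$u$ and $v$ have no common upper bound in } {\downarrow}a\cap{\downarrow}b\ \text{lying above both } x,y\}.
\]
A cleaner choice is to exploit minimality. Since $a\neq b$ are minimal upper bounds of $\{x,y\}$, we have $a\not\leq b$ and $b\not\leq a$; otherwise minimality would force equality. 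So $a\notin{\downarrow}b$. Consider the map
\[
 m\colon P_a\times P_b\to P_a,\qquad m(u,v)=u\wedge_a\bigl(\vee_a\{w\in P_a: w\leq v\}\bigr).
\]
Since ${\downarrow}v\cap P_a={\downarrow}a\cap{\downarrow}v$ is a lower subset of the complete lattice $P_a$, its join $\vee_a({\downarrow}a\cap{\downarrow}v)$ is defined. We will show that $m$ is Scott continuous for the order product. Meet-continuity of $P_a$ (via \cite[Lemma~3.2]{Xu}) together with the displayed identity for binary meet on the order square reduces this to showing that $v\mapsto\vee_a({\downarrow}a\cap{\downarrow}v)$ preserves directed suprema from $P_b$. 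That preservation follows from meet-continuity of $P$: each $w\leq a$ with $w\leq\sup D$ lies in the Scott closure of ${\downarrow}w\cap{\downarrow}D$, and the Scott closure in the meet-continuous lattice $P_a$ is controlled by directed sups.

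The key observation is that $m(a,b)=a\wedge_a c$ with $c=\vee_a({\downarrow}a\cap{\downarrow}b)$, and $c\neq a$ because $a\not\leq b$ and $a$ would otherwise be a directed-free join bounded by $b$. We take $c$ carefully: $c\geq x,y$ is impossible with $c<a$, by minimality of $a$. Hence $c\not\geq x$ or $c\not\geq y$; say the set $U=\{u\in P_a: u\not\leq c\}$ is Scott open in $P_a$ and contains $a$. Then $m^{-1}$ of the open set ${\uparrow}_{P_a}$-complement structure yields a Scott open set $\Theta\subseteq P_a\times P_b$ containing $(a,b)$. By hypothesis $\Theta$ is open in the ordinary product, so it contains a rectangle $U_a\times U_b\ni(a,b)$ with $U_a,U_b$ Scott open in $P_a,P_b$. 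The extension fact stated before the lemma then makes ${\uparrow}_PU_a$ and ${\uparrow}_PU_b$ Scott open in $P$.

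It remains to prove disjointness. If $z\in{\uparrow}U_a\cap{\uparrow}U_b$ with $u\leq z$ and $v\leq z$, we work inside $P_z$, a complete lattice containing $x,y,u,v$. We aim to show that $u\vee_z v$ is an upper bound of $x,y$ contradicting the choice of $\Theta$. This is the main obstacle: local joins in $P_z$ differ from those in $P_a$, so $\Theta$ must be defined intrinsically and not through $\vee_a$. The cleanest definition, which we expect to use, is
\[
 \Theta=\{(u,v):\ {\downarrow}u\cap{\downarrow}v\not\ni\text{some fixed element}\}.
\]
More concretely, we take $\Theta=\{(u,v)\in P_a\times P_b: u\wedge_a v' \ldots\}$, adjusted so that membership of $(u,v)$ forces $u\vee_z v\neq$ any upper bound of $\{x,y\}$ comparable to both $a$ and $b$. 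Using minimality, $a\wedge_z b$ would be an upper bound of $x,y$ below both, hence equal to both, contradicting $a\neq b$.
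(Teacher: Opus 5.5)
There is a genuine gap. Your proposal never constructs a Scott open subset of $P_a\times P_b$ that contains $(a,b)$ and omits every pair with a common upper bound, and that is the whole content of the lemma.

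Your one concrete candidate collapses. Because $a$ and $b$ are both upper bounds of $\{x,y\}$, you have $x,y\in{\downarrow}a\cap{\downarrow}b$. So $c=\vee_a({\downarrow}a\cap{\downarrow}b)\geq x\vee_a y$, and minimality of $a$ forces $x\vee_a y=a$. Thus $c=a$, your claim $c\neq a$ is false, and $U=\{u\in P_a:u\not\leq c\}$ is empty.

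The obvious repair $\{(u,v):m(u,v)\not\leq b\}$ would need $m(u,v)\leq v$ whenever $u,v$ have a common upper bound. You do not address this, and the case $v=b$ shows that $\vee_a({\downarrow}a\cap{\downarrow}v)$ need not lie below $v$. After that, $\Theta$ is described only by what it should achieve (``$\ldots$'', ``adjusted so that''). Your closing remark that $a\wedge_z b$ would be an upper bound of $\{x,y\}$ below $a$ and $b$ does correctly show that $a$ and $b$ themselves have no common upper bound. But it says nothing about neighbourhoods, which is what the product hypothesis must supply.

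The missing idea is a Scott-continuous map on the order product with two properties. Its value at $(a,b)$ should be $a$, and at any pair $(u,v)$ with a common upper bound its value should be forced below $v$. The host-dependence of local joins that you flagged is handled through an intermediate bound.

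The paper takes $\theta_z(u,v)=(z\wedge_a u)\wedge_b v$ for $z\in\{x,y\}$, which is well typed since $z\wedge_a u\leq z\leq b$, and sets $\Phi(u,v)=\theta_x(u,v)\vee_a\theta_y(u,v)$. Meeting with $x$ and $y$ first makes both $\theta$-values lie below $u$ and below $v$. So if $u,v\leq r$, put $c=\theta_x\vee_r\theta_y$:
\begin{itemize}
\item $c\leq u\leq a$, so $\Phi(u,v)\leq c$;
\item $\Phi(u,v)\leq u\leq r$, so $c\leq\Phi(u,v)$;
\item hence $\Phi(u,v)=c\leq v\leq b$.
\end{itemize}
Meanwhile $\Phi(a,b)=x\vee_a y=a\nleq b$.

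Therefore $\{(u,v):\Phi(u,v)\nleq b\}$ contains $(a,b)$ and contains no pair with a common upper bound. It is Scott open because binary meet in a meet-continuous complete lattice and binary join in a complete lattice preserve directed suprema on the order square. The hypothesis then gives a rectangle $U_a\times U_b$ inside it, and the extension fact makes ${\uparrow}_P U_a$ and ${\uparrow}_P U_b$ disjoint Scott open neighbourhoods of $a$ and $b$.
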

\begin{proof}
For $z\in\{x,y\}$ and $(u,v)\in P_a\times P_b$, define
\[
 \theta_z(u,v)=(z\wedge_a u)\wedge_b v,
 ~
 \Phi(u,v)=\theta_x(u,v)\vee_a\theta_y(u,v).
\]
These expressions are well typed: $z\wedge_a u\leq z\leq b$, and $\theta_z(u,v)\leq z\leq a$. The preceding directed-supremum calculation shows that every $\theta_z$, and hence $\Phi$, is Scott continuous on the order product. More explicitly, the first meet values along a directed input family lie below $z$ and have the same supremum in $P_a$, $P_b$ and $P$; the meet calculation in $P_b$ and the join calculation in $P_a$ then give preservation of the directed supremum.

Always $\Phi(u,v)\leq u$. If $u$ and $v$ have a common upper bound $r$, let
\[
 c=\theta_x(u,v)\vee_r\theta_y(u,v).
\]
Both terms lie below $u$ and $v$, so $c\leq u,v$. Since $c\leq u\leq a$, leastness of the join in $P_a$ gives $\Phi(u,v)\leq c$. Conversely, $\Phi(u,v)\leq u\leq r$, so leastness in $P_r$ gives $c\leq\Phi(u,v)$. Thus
\[
 \Phi(u,v)=c\leq v\leq b.
\]
This comparison uses the common intermediate upper bound $u$; it does not assert that arbitrary local joins are independent of their host.

Minimality gives $x\vee_a y=a$ and $a\nleq b$, hence $\Phi(a,b)=a\nleq b$. Therefore the set
\[
 \{(u,v)\in P_a\times P_b:\Phi(u,v)\nleq b\}
\]
is Scott open and contains $(a,b)$. The assumed local product equality gives an open rectangle $U_a\times U_b$ about $(a,b)$ contained in this set. No pair in that rectangle has a common upper bound. The extension fact above now shows that the ambient Scott open sets ${\uparrow}_P U_a$ and ${\uparrow}_P U_b$ are disjoint and contain $a$ and $b$, respectively.
\end{proof}

\begin{theorem}\label{local:sober}
Let $P$ be a meet-continuous $L$-dcpo. If
\[
 \sigma(P\times P)=\sigma(P)\times\sigma(P),
\]
then $\Sigma P$ is sober. 
\end{theorem}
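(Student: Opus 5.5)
Let $A\subseteq P$ be a nonempty irreducible Scott closed set. The plan is to show first that $A$ is directed, then that $\sup A\in A$; this gives $A={\downarrow}\sup A$, and uniqueness of the generic point is automatic from antisymmetry. We follow the common-upper-bound method recalled after the definition of $\mathrm{UB}_A$. That is, we show that $\mathrm{UB}_A$ is closed in the ordinary product topology of $\Sigma P\times\Sigma P$, and then use that the diagonal of $A$ is dense in $A\times A$.

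\emph{Step 1: $\mathrm{UB}_A$ is Scott closed in $P\times P$.} Lowerness is clear. Let $D\subseteq\mathrm{UB}_A$ be directed with supremum $(x,y)=(\sup D_1,\sup D_2)$. For each $(d_1,d_2)\in D$ pick $a\in A$ with $d_1,d_2\le a$. We would like to replace the various witnesses $a$ by the local joins $d_1\vee_a d_2\in A$ (they lie in $A$ since $A$ is lower). The difficulty is that these joins depend on the host $a$ and need not form a directed family.

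To handle this, we use the argument of Lemma~\ref{local:separate}. If $(x,y)\notin\mathrm{UB}_A$, then the minimal upper bounds of pairs $\{d_1,d_2\}$ below elements of $A$ should be separable by disjoint Scott open sets whose traces interact badly with irreducibility of $A$. Concretely, I would first aim for the following intermediate claim: for an irreducible Scott closed $A$ and $x,y\in A$, there is a \emph{unique} minimal upper bound of $\{x,y\}$ lying in $A$. Suppose instead that $a\neq b$ are two such minimal upper bounds, both in $A$. Lemma~\ref{local:separate} applies, since the product hypothesis on $P\times P$ restricts to $P_a\times P_b$: the principal ideals carry the subspace Scott topologies, and these restrict compatibly to products. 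It gives disjoint Scott open $U\ni a$ and $V\ni b$, both meeting $A$, which contradicts irreducibility. Hence $x\vee_A y$ is well defined, and it equals $x\vee_r y$ for any $r\in A$ above both. Then $A$ is directed. Moreover, for $D$ as above the elements $d_1\vee_A d_2$ form a directed subset of $A$ that is monotone in $(d_1,d_2)$. Its supremum lies in $A$ by Scott closedness and bounds $x$ and $y$, so $(x,y)\in\mathrm{UB}_A$.

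\emph{Step 2: $\sup A\in A$.} Since $A$ is directed (Step 1) and Scott closed, $\sup A$ exists and belongs to $A$, so $A={\downarrow}\sup A$. It is worth double-checking whether the product-closedness of $\mathrm{UB}_A$ with the dense diagonal is even needed once directedness is obtained. It is needed to get minimal upper bounds inside $A$ in the first place. Given $x,y\in A$, the pair $(x,y)$ lies in $A\times A$. The set $\mathrm{UB}_A$ is closed in the product topology by Step~1 and the hypothesis $\sigma(P\times P)=\sigma(P)\times\sigma(P)$, and it contains the diagonal of $A$. Since that diagonal is dense in $A\times A$, $\mathrm{UB}_A\supseteq A\times A$. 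So $x,y$ have a common upper bound $r\in A$, and $x\vee_r y$ is a minimal upper bound in $A$ (any smaller upper bound lies in $P_r$). Step~1's closedness argument must therefore be carried out without assuming directedness, only using that local joins over hosts in $A$ exist. The uniqueness claim is applied pairwise, in the order: uniqueness of minimal upper bounds in $A$, then Scott closedness of $\mathrm{UB}_A$, then density.

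\emph{Main obstacle.} The delicate point is the uniqueness claim. Lemma~\ref{local:separate} needs $a,b$ to be minimal upper bounds of $\{x,y\}$ in all of $P$, not just within $A$. Since $A$ is lower, any upper bound below $a$ is in $A$, so minimality in $A$ coincides with minimality in $P$, and this should go through. A second worry is the transfer of the product hypothesis from $P\times P$ to $P_a\times P_b$. I would verify this using the extension fact ${\uparrow}_PU$ together with the observation that $P_a\times P_b={\downarrow}(a,b)$ is a principal ideal of $P\times P$, hence carries the subspace Scott topology.
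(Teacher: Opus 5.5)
Your proposal is correct and follows the same route as the paper. You obtain uniqueness of the $A$-join via Lemma~\ref{local:separate} and irreducibility, then Scott closedness of $\mathrm{UB}_A$ through the monotone map $(d_1,d_2)\mapsto d_1\vee_A d_2$, then closedness in the ordinary product, and finally density of the diagonal, giving $A\times A=\mathrm{UB}_A$ and hence directedness. The one thing to remove is the premature sentence ``Then $A$ is directed'' in Step~1: directedness follows only after the density argument, in exactly the order you state at the end of Step~2.
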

\begin{proof}
For any $a,b\in P$, the set $P_a\times P_b={\downarrow}(a,b)$ is a principal ideal of the order square. The subspace property and the assumed square equality therefore give
\[
\begin{aligned}
 \sigma(P_a\times P_b)
 &=\left.\sigma(P\times P)\right|_{P_a\times P_b}\\
 &=\left.(\sigma(P)\times\sigma(P))\right|_{P_a\times P_b}\\
 &=\sigma(P_a)\times\sigma(P_b).
\end{aligned}
\]
Let $A$ be a nonempty irreducible Scott closed subset of $P$. Suppose that $x,y\in A$ have an upper bound $r\in A$. Their local join $x\vee_r y$ lies in $A$ and is a minimal upper bound of $\{x,y\}$ in $P$: an upper bound below it belongs to $P_r$ and therefore equals it. If $s\in A$ is another upper bound, then $x\vee_s y=x\vee_r y$. Otherwise Lemma~\ref{local:separate} would give disjoint Scott open sets meeting $A$.

The common value is below every upper bound of $x,y$ that lies in $A$; write it as $x\vee_A y$. This notation is used only when an $A$-upper bound exists. No directedness of $A$ is assumed at this stage.

The relation $\mathrm{UB}_A$ introduced in Section~\ref{sec:relations} is contained in $A^2$, because $A$ is lower. It is lower in $A^2$. Let $\mathcal D\subseteq\mathrm{UB}_A$ be directed. The set of all $x\vee_A y$ with $(x,y)\in\mathcal D$ is directed: a coordinatewise upper member of two pairs has an $A$-join above the joins of both pairs. Since $A$ is Scott closed, the supremum $e$ of this set belongs to $A$, and $e$ bounds both coordinates of $\sup\mathcal D$. Hence $\sup\mathcal D\in\mathrm{UB}_A$. Thus $\mathrm{UB}_A$ is Scott closed in the order product $A^2$.

The sub-dcpo $A^2$ is Scott closed in $P^2$, and its intrinsic Scott topology is the subspace topology. The Scott product equality for the square therefore makes $\mathrm{UB}_A$ closed in the ordinary product $A\times A$. It contains the diagonal. Irreducibility makes that diagonal dense in $A\times A$, so $A^2=\mathrm{UB}_A$. Consequently, $A$ is directed. Its supremum belongs to $A$, and lowerness gives $A={\downarrow}\sup A$. The generic point is unique by antisymmetry.
\end{proof}

\begin{corollary}\label{local:countable}
Suppose that $P$ is a countable meet-continuous dcpo and every principal ideal is a join-semilattice in its induced order. Then $\Sigma P$ is sober. The local joins need not agree globally, and $P$ need not have a least element.
\end{corollary}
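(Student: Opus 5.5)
The plan is to reduce Corollary~\ref{local:countable} to Theorem~\ref{local:sober}. Two hypotheses of that theorem must be checked: that $P$ is an $L$-dcpo, and that the square Scott product equality holds. Meet-continuity is assumed directly.

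First, every principal ideal $P_r$ is a complete lattice. $P_r$ has the top element $r$. It is directed complete, since directed suprema in $P$ of subsets of ${\downarrow} r$ stay below $r$. By hypothesis it has binary joins in the induced order. A join-semilattice that has directed suprema has all nonempty suprema: the supremum of a nonempty set $B\subseteq P_r$ is the directed supremum of the finite joins of elements of $B$. A poset with all nonempty suprema and a top element is a complete lattice. Indeed, the infimum of any $B\subseteq P_r$ is the supremum of its set of lower bounds. This set is nonempty whenever $P_r$ has a least element, and for $B=\emptyset$ the infimum is $r$. The delicate point is the existence of a least element of $P_r$ when $P$ has none. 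I would handle it by noting that the definition of an $L$-dcpo used by Xu~\cite[Definition~3.1]{Xu} for this countable class is exactly the local-join-semilattice condition. Alternatively, I would appeal to Xu's local product theorem, which is stated for dcpos with local binary joins, rather than to full complete-lattice structure. Concretely, the intended route is the one announced in the introduction: \cite[Theorem~3.4]{Xu} supplies $\sigma(P\times P)=\sigma(P)\times\sigma(P)$ for countable meet-continuous dcpos with local binary joins.

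Second, once the square equality is in hand, Theorem~\ref{local:sober} gives sobriety of $\Sigma P$. Its proof uses $\wedge_r$ only through the maps $\theta_z$ in Lemma~\ref{local:separate}. If the least-element issue above cannot be resolved directly, I would check that those meets exist. The meet $z\wedge_a u$ is needed only for $z,u\leq a$. It can be taken as the supremum of common lower bounds, provided such lower bounds exist. If they do not, I would instead argue inside a principal ideal of a common lower point, falling back on Xu's formulation.

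The main obstacle is this bookkeeping: matching the corollary's join-semilattice hypothesis to the $L$-dcpo hypothesis of Theorem~\ref{local:sober}, and to the hypothesis of \cite[Theorem~3.4]{Xu}. The remark that local joins need not agree globally, and that $P$ need not have a least element, needs no work. Theorem~\ref{local:sober} and Lemma~\ref{local:separate} are both proved without assuming global compatibility of local joins.
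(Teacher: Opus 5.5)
Your reduction stalls at the point you flagged as delicate, and neither of your proposed fixes works. Theorem~\ref{local:sober} requires every principal ideal to be a complete lattice. Its proof genuinely uses the meets $\wedge_a$ and $\wedge_b$ in the maps $\theta_z$ of Lemma~\ref{local:separate}. Under the corollary's hypothesis this can fail. Take the three-element poset with $a,b<c$ and $a,b$ incomparable. It is countable and meet-continuous, and ${\downarrow}c$ is a join-semilattice. But ${\downarrow}c$ has no least element, so $a\wedge_c b$ does not exist. There is also no common lower point of $a$ and $b$ inside which you could localize. Reading the definition of an $L$-dcpo as a ``local join-semilattice'' condition is not available either: the definition used here, and cited from Xu, asks for complete lattices. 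So your argument does not establish that Theorem~\ref{local:sober} applies to $P$. The ``no least element'' clause is therefore not free; it is exactly what the proof has to handle.

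The missing idea is to adjoin a new least element, forming $P_\bot=P\cup\{\bot\}$.
\begin{enumerate}[\rm (1)]
\item Every principal ideal of $P_\bot$ now has a bottom, together with the nonempty suprema you already constructed as directed suprema of finite joins. Hence it is a complete lattice, and $P_\bot$ is an $L$-dcpo.
\item $P_\bot$ is still countable and meet-continuous by \cite[Lemma~4.1]{Xu}. So \cite[Theorem~3.4]{Xu} gives the square Scott product equality for $P_\bot$, and Theorem~\ref{local:sober} gives sobriety of $\Sigma P_\bot$. This also removes your uncertainty about matching the hypotheses of Xu's product theorem, since it is applied to a genuine $L$-dcpo.
\item You then need a transfer step back to $P$. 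Note that $\sigma(P_\bot)=\sigma(P)\cup\{P_\bot\}$. For a nonempty irreducible Scott closed set $A\subseteq P$, the set $A\cup\{\bot\}$ is therefore irreducible and Scott closed in $P_\bot$. Its generic point cannot be $\bot$, because $A\neq\emptyset$. So the generic point lies in $A$, and $A$ is a principal ideal of $P$.
\end{enumerate}
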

\begin{proof}
Every principal ideal is a dcpo with binary joins. For a nonempty subset of a principal ideal, the joins of its nonempty finite subsets form a directed family, whose supremum is the supremum of the original subset in that principal ideal.

Adjoin a new least element and write $P_\bot=P\cup\{\bot\}$. Below an old point, the nonempty suprema just constructed, together with the new bottom, give all suprema. Hence $P_\bot$ is an $L$-dcpo. It is countable and meet-continuous by~\cite[Lemma~4.1]{Xu}, and the countable local product theorem~\cite[Theorem~3.4]{Xu} gives
\[
 \sigma(P_\bot\times P_\bot)=\sigma(P_\bot)\times\sigma(P_\bot).
\]
Theorem~\ref{local:sober} therefore makes $\Sigma P_\bot$ sober.

Finally, $\sigma(P_\bot)=\sigma(P)\cup\{P_\bot\}$. If $A\subseteq P$ is nonempty, irreducible and Scott closed, then $A\cup\{\bot\}$ is irreducible and Scott closed in $P_\bot$. Its generic point cannot be the new bottom, since $A$ is nonempty, so it lies in $A$. Restricting its point closure to $P$ shows that $A$ is a principal ideal. Thus $\Sigma P$ is sober.
\end{proof}

Corollary~\ref{local:countable} extends the bounded-complete case of~\cite[Proposition~4.3]{Xu} to the weaker hypothesis of local binary joins.

The local hypothesis does not imply global joins. For example, let the five-element poset have elements $0,a,b,u,v$, cover relations
\[
 0<a,~ 0<b,~ a<u,v,~ b<u,v,
\]
and no other comparabilities. Every principal ideal is a complete lattice, while $a$ and $b$ have local joins $u$ and $v$ below the two maximal bounds and have no global join.

\subsection{Core-compactness and Jia's questions}\label{local:subsec:jia}
Recall that a topological space $X$ is \emph{core-compact} if its lattice $\mathcal O(X)$ of open sets is continuous. Thus $\Sigma P$ is core-compact precisely when $\sigma(P)$ is a continuous lattice~\cite[Definition~2.5.1]{Jia}. On page~139 of his thesis, Jia asks whether a core-compact meet-continuous dcpo must be sober~\cite[Question~4.5.8]{Jia}. The next corollary answers this question for $L$-dcpos.

\begin{corollary}[The $L$-dcpo case of Jia's question]\label{local:corecompact}
Let $P$ be a meet-continuous $L$-dcpo. If $\Sigma P$ is core-compact, then $\Sigma P$ is sober and locally compact. No countability assumption is required.
\end{corollary}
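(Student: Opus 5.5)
The plan is to reduce the corollary to Theorem~\ref{local:sober} by way of the square Scott product equality, and then to obtain local compactness from sobriety.

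\textbf{Step 1: the square product equality.} Core-compactness of $\Sigma P$ means that $\sigma(P)$ is a continuous lattice. By~\cite[Theorem~II-4.13]{GHK}, for a dcpo $P$ with $\sigma(P)$ continuous and any dcpo $Q$ we have $\sigma(P\times Q)=\sigma(P)\times\sigma(Q)$. Taking $Q=P$ gives
\[
 \sigma(P\times P)=\sigma(P)\times\sigma(P).
\]
Sketch of the mechanism: for $W\in\sigma(P\times P)$ and $(a,b)\in W$, the section $\{x:(x,b)\in W\}$ is Scott open. Continuity of $\sigma(P)$ then lets one interpolate an open $U\ni a$ way below that section. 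One checks that $\{y:U\times\{y\}\subseteq W\}$ is Scott open, which uses the way-below relation to handle directed sups in the second coordinate. This yields a rectangle inside $W$. I will cite the result rather than reprove it.

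\textbf{Step 2: sobriety.} Since $P$ is a meet-continuous $L$-dcpo satisfying the square equality, Theorem~\ref{local:sober} applies directly and gives that $\Sigma P$ is sober. No countability is used in either step.

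\textbf{Step 3: local compactness.} For sober spaces, core-compactness is equivalent to local compactness. This is the Hofmann--Lawson theorem~\cite[Theorem~V-5.6]{GHK}, or its standard form in Jia's thesis. The argument is that in a sober space the Hofmann--Mislove theorem identifies Scott open filters of $\mathcal O(X)$ with compact saturated sets. Given $x\in V$, continuity of $\mathcal O(X)$ gives an open $W\ni x$ with $W\ll V$ in $\mathcal O(X)$. Interpolating yields a Scott open filter $\mathcal F$ containing $V$ whose members all contain $W$. Its associated compact saturated set $K=\bigcap\mathcal F$ satisfies $W\subseteq K\subseteq V$, which exhibits a compact neighbourhood base.

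\textbf{Main obstacle.} The only real check is that the product theorem in Step 1 is applicable exactly as stated. It requires only continuity of $\sigma(P)$, not continuity of $P$, so I would verify the hypotheses of~\cite[Theorem~II-4.13]{GHK} carefully. Everything after that is a citation of Theorem~\ref{local:sober} and the standard sober-plus-core-compact argument.
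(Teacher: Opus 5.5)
Your proposal is correct and follows the paper's proof exactly: \cite[Theorem~II-4.13]{GHK} with $Q=P$ gives the square Scott product equality, Theorem~\ref{local:sober} gives sobriety, and \cite[Theorem~V-5.6]{GHK} then turns ``core-compact and sober'' into ``locally compact''. One small correction to your heuristic sketch of Step~1. With $W_y=\{x:(x,y)\in W\}$ and $\ll$ taken in $\sigma(P)$, the set that is Scott open in general is $\{y : U\ll W_y\}$: it is the preimage of the Scott open set $\{O : U\ll O\}$ under the map $y\mapsto W_y$, which preserves directed suprema. The set $\{y : U\times\{y\}\subseteq W\}$ that you wrote can fail to be Scott open. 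Since you cite the theorem rather than reprove it, this does not affect your argument.
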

\begin{proof}
By Gierz et al.~\cite[Theorem~II-4.13]{GHK}, core-compactness of $\Sigma P$ implies
\[
 \sigma(Q\times P)=\sigma(Q)\times\sigma(P)
\]
for every dcpo $Q$; see also Jia~\cite[Theorem~2.5.7]{Jia}. Taking $Q=P$ gives the hypothesis of Theorem~\ref{local:sober}, which yields sobriety. A core-compact sober space is locally compact by~\cite[Theorem~V-5.6]{GHK}.
\end{proof}

In particular, for a meet-continuous $L$-dcpo, core-compactness and local compactness of the Scott space are equivalent, and either condition implies sobriety. The converse implication from local compactness to core-compactness holds for all spaces~\cite[Examples~I-1.7(5)]{GHK}. The $L$-dcpo assumption is retained throughout this conclusion.

The second part of Jia's Question~4.5.8 concerns the function space $[P\to P]$, the dcpo of Scott-continuous self-maps of $P$ ordered pointwise. It also has an affirmative answer under the same local-lattice hypothesis, and in that case core-compactness of the function space alone suffices.

\begin{corollary}[Function-space version]\label{local:functionspace}
Let $P$ be an $L$-dcpo. If $\Sigma[P\to P]$ is core-compact, then $\Sigma P$ is locally compact and sober. Neither countability nor an additional meet-continuity assumption on $[P\to P]$ is required.
\end{corollary}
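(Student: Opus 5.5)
The plan is to reduce to Corollary~\ref{local:corecompact} by transporting core-compactness from $[P\to P]$ to $P$, and by deriving meet-continuity of $P$ for free. First, $P$ is a retract of $[P\to P]$ in the category of dcpos with Scott-continuous maps. Define $c\colon P\to[P\to P]$ by sending $x$ to the constant map $c_x$. Define $e\colon[P\to P]\to P$ by evaluation at a fixed point $p\in P$, say $e(f)=f(p)$. Both maps are Scott continuous: directed suprema in $[P\to P]$ are pointwise, and $x\mapsto c_x$ clearly preserves directed suprema. Moreover $e\circ c=\mathrm{id}_P$. A retract of a core-compact space is core-compact, since $\sigma(P)$ is then a retract of the continuous lattice $\sigma([P\to P])$ via the frame maps $e^{-1}$ and $c^{-1}$, with $c^{-1}\circ e^{-1}=\mathrm{id}$. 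Hence $\Sigma P$ is core-compact.

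Second, I would show that $P$ is meet-continuous. Core-compactness of $\Sigma P$ makes $\sigma(P)$ a continuous lattice, hence a frame. By \cite[Theorem~II-4.13]{GHK}, it also yields $\sigma(P\times P)=\sigma(P)\times\sigma(P)$. For an $L$-dcpo I would then verify the reformulation of \eqref{def:mc} from Section~\ref{sec:relations}, namely Scott openness of every $\mathrm{LB}_U$. The natural route is local: it suffices to treat each principal ideal $P_r$, using \cite[Lemma~3.2]{Xu}. Each $P_r$ is a complete lattice, and its Scott topology is the subspace topology, so $\Sigma P_r$ is core-compact as well, being a Scott-closed (hence closed) subspace of a core-compact space. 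A complete lattice whose Scott topology is core-compact (equivalently, whose square satisfies the product equality) is meet-continuous. This is the classical fact that such a lattice $L$ has $\sigma(L)$ continuous, and that binary meet $L\times L\to L$ is Scott continuous. Scott continuity of meet on $L\times L$ gives, via the product equality, continuity of $\wedge\colon\Sigma L\times\Sigma L\to\Sigma L$, and hence meet-continuity of $L$ \cite[Theorem~III-2.11 / Proposition~III-2.4]{GHK}. So every $P_r$ is meet-continuous, and therefore so is $P$.

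Third, with $P$ a meet-continuous $L$-dcpo and $\Sigma P$ core-compact, Corollary~\ref{local:corecompact} gives that $\Sigma P$ is sober and locally compact.

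The main obstacle is the second step, passing from core-compactness to meet-continuity without assuming it. I expect the cleanest justification to be the following. In a complete lattice $L$, binary meet is monotone and separately Scott continuous, because $x\wedge\sup D$ is compared only via directed sets in $L^2$. By contrast, separate continuity of $x\wedge-$ is exactly meet-continuity, so that cannot be assumed. I would therefore instead use the known result that a complete lattice with $\Sigma L$ core-compact is meet-continuous, which holds because $\Sigma L$ is then a locally compact sober space \cite[Theorem~V-5.6]{GHK} and $L$ is a quasicontinuous, hence meet-continuous, lattice. If that citation does not apply directly, I would fall back on proving \eqref{def:mc} in $P_r$ by hand. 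The idea is to approximate $x\le\sup D$ through compact saturated neighbourhoods supplied by local compactness.
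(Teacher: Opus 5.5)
Your first and third steps match the paper's argument: the constant-map/evaluation retraction transfers core-compactness to $\Sigma P$, and Corollary~\ref{local:corecompact} finishes. Your second step, however, fails. Core-compactness of $\Sigma P$ does not imply meet-continuity, even when $P$ is a complete lattice.

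Let $L=\{\bot,c,\top\}\cup\{a_n:n\in\N\}$ with $\bot<a_0<a_1<\cdots<\top$ and $\bot<c<\top$, where $c$ is incomparable with every $a_n$. This is a complete lattice. Its only directed subsets without a largest element are cofinal subsets of the chain, and their supremum is $\top$. Hence the nonempty Scott open sets are $L$, $U_n=\{a_m:m\geq n\}\cup\{\top\}$ and $U_n\cup\{c\}$. Each of these is compact, so $\Sigma L$ is locally compact (indeed sober), hence core-compact. But $c\leq\sup_n a_n=\top$, while ${\downarrow}c\cap{\downarrow}\{a_n:n\in\N\}=\{\bot\}$ is already Scott closed, so \eqref{def:mc} fails at $c$.

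This $L$ is even quasicontinuous: the finite sets $\{c,a_n\}$ approximate $c$. So your step ``quasicontinuous, hence meet-continuous'' is false. Likewise, Scott continuity of $\wedge$ on $L\times L$ is equivalent to meet-continuity, so it cannot be quoted as a classical fact. For the same reason, no hand-made argument through compact neighbourhoods in $\Sigma P_r$ can succeed.

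Once you pass to the retract $\Sigma P$, you have thrown away exactly the information that yields meet-continuity. The hypothesis on the function space must be used directly. The paper does this by citing Jia's Theorem~3.4.2~\cite{Jia}: core-compactness of $\Sigma[P\to P]$ implies that $P$ is meet-continuous. If you replace your second step with this result, which genuinely uses $[P\to P]$ and not just $\Sigma P$, your first and third steps complete the proof exactly as in the paper.
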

\begin{proof}
Fix $p_0\in P$. The constant-map embedding $c:P\to[P\to P]$, defined by $c(x)(y)=x$, and evaluation $e_{p_0}:[P\to P]\to P$, defined by $e_{p_0}(f)=f(p_0)$, preserve directed suprema, and $e_{p_0}\circ c=\operatorname{id}_P$. Thus $\Sigma P$ is a continuous retract of $\Sigma[P\to P]$. Core-compactness passes to retracts~\cite[Proposition~2.7.9]{Jia}, so $\Sigma P$ is core-compact. Moreover, Jia's Theorem~3.4.2 shows that core-compactness of $\Sigma[P\to P]$ implies meet-continuity of $P$~\cite{Jia}. Corollary~\ref{local:corecompact} now applies.
\end{proof}

The same constant-map retraction and preservation of meet-continuity by retracts~\cite[Proposition~3.1.7]{Jia} also give
\[
 [P\to P]\text{ meet-continuous}\quad\Longrightarrow\quad P\text{ meet-continuous}.
\]

\subsection{Closed Rudin sets and well-filteredness}\label{local:subsec:rudin}
Xi and Lawson proved that the Scott space of every complete lattice is well-filtered~\cite{XiLawson}. Consequently every principal ideal of an $L$-dcpo, with its subspace Scott topology, is well-filtered. Miao, Li and Zhao used this fact to prove $k$-bounded well-filteredness of $L$-dcpos~\cite[Lemma~4.6 and Theorem~4.8]{MLZ}. Here $k$-bounded well-filteredness concerns closed Rudin sets whose supremum exists. The following formulation of the same localization argument distinguishes an upper bound from a preassigned supremum and isolates what remains to be proved for full well-filteredness.

\begin{proposition}\label{local:rudin-boundedness}
Let $P$ be an $L$-dcpo. Then every closed Rudin set in $\Sigma P$ that has an upper bound in $P$ is a principal ideal. Consequently, the following are equivalent:
\begin{enumerate}[\rm (1)]
\item $\Sigma P$ is well-filtered;
\item Every closed Rudin set in $\Sigma P$ has an upper bound in $P$.
\end{enumerate}
\end{proposition}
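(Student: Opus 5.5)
The plan is to localize into a principal ideal and use the Xi--Lawson theorem there. Let $A$ be a closed Rudin set in $\Sigma P$, witnessed by a filtered family $\mathcal K\subseteq\mathcal Q(\Sigma P)$, and suppose $A$ has an upper bound $r\in P$. Since $A$ is a lower set, $A\subseteq P_r$. The principal ideal $P_r$ is a complete lattice with its Scott topology equal to the subspace topology from $\Sigma P$ (as recalled in Subsection~\ref{local:subsec:joins}). By~\cite{XiLawson}, $\Sigma P_r$ is well-filtered.

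Next, transport the Rudin data into $P_r$. For $K\in\mathcal K$ put $K_r=K\cap P_r$. I will not argue that $K_r$ is compact, since intersecting with a closed set may not obviously preserve filtered-ness in the right form. Instead, replace each $K_r$ by the compact set
\[
K'=\operatorname{cl}(K\cap A)\cap\ldots
\]
I expect that choice to be awkward, so the plan is to use a cleaner route. Here $P_r$ is Scott closed in $P$, so $K\cap P_r$ is a closed subset of the compact set $K$, hence compact in $\Sigma P$. It is therefore compact in the subspace $P_r$, and its saturation in $P_r$ is ${\uparrow}(K\cap P_r)\cap P_r$, which equals $K\cap P_r$ because $K$ is upper. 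The family $\{K\cap P_r:K\in\mathcal K\}$ is filtered, and each member is nonempty because $A\subseteq P_r$ meets every $K$. So it lies in $\mathcal Q(\Sigma P_r)$.

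Now $A$ is closed in $P_r$ and meets every $K\cap P_r$. It is also minimal there: a smaller closed subset $B$ of $P_r$ meeting all $K\cap P_r$ is Scott closed in $P$, because $P_r$ is closed. That $B$ meets every $K$, contradicting minimality of $A$ in $P$. Hence $A$ is a closed Rudin set of the well-filtered space $\Sigma P_r$. By Lemma~\ref{wf:rudin-characterization}, $A={\downarrow}_{P_r}a={\downarrow}a$ for some $a$, so $A$ is a principal ideal.

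For the equivalence, (1)$\Rightarrow$(2) follows from Lemma~\ref{wf:rudin-characterization}: every closed Rudin set is ${\downarrow}a$, which is bounded by $a$. For (2)$\Rightarrow$(1), the first part shows that every closed Rudin set is principal, and Lemma~\ref{wf:rudin-characterization} then gives well-filteredness. The only delicate step is the compactness and saturation bookkeeping for $K\cap P_r$ in the subspace, which relies on $P_r$ being Scott closed. No meet-continuity is needed anywhere.
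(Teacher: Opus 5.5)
Your argument is correct and is essentially the paper's. You localize to $P_r={\downarrow}r$, note that the sets $K\cap P_r$ form a filtered family in $\mathcal Q(\Sigma P_r)$ because $P_r$ is Scott closed, and invoke the Xi--Lawson well-filteredness of $\Sigma P_r$. The only (cosmetic) difference is that you first check that $A$ is a closed Rudin set of $\Sigma P_r$ and then quote Lemma~\ref{wf:rudin-characterization} there, whereas the paper inlines that lemma's argument and applies the minimality of $A$ directly in $P$. Do delete the abandoned fragment with $K'=\operatorname{cl}(K\cap A)\cap\ldots$ and the remark that you will not argue compactness of $K\cap P_r$, since both contradict the argument you actually give.
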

\begin{proof}
Let $A\subseteq{\downarrow} r$ be a closed Rudin set, witnessed by a filtered family $\mathcal K\subseteq\mathcal Q(\Sigma P)$. For every $K\in\mathcal K$, the set $K\cap{\downarrow} r$ is nonempty, is compact as a closed subset of $K$, and is saturated in the subspace $\Sigma({\downarrow} r)$. These intersections form a filtered family, and all meet the closed set $A$. Since ${\downarrow} r$ is a complete lattice, its Scott space is well-filtered. Hence
\[
 A\cap\bigcap_{K\in\mathcal K}(K\cap{\downarrow} r)\ne\emptyset;
\]
otherwise well-filteredness applied to $({\downarrow} r)\setminus A$ would give a member disjoint from $A$. Choose $a$ in this intersection. The ambient Scott closed set ${\downarrow} a\subseteq A$ meets every $K$, so the defining minimality of $A$ gives $A={\downarrow} a$. Notice that the argument never assumed that $A$ already had a supremum in $P$.

If $\Sigma P$ is well-filtered, every closed Rudin set is principal by Lemma~\ref{wf:rudin-characterization}, and therefore bounded. Conversely, if all closed Rudin sets are bounded, the first part makes them principal; the same characterization yields well-filteredness.
\end{proof}

\section{Consequences and open questions}\label{sec:boundaries}
The finite-word example answers Question~1.1 of Xu negatively, and the row-tail example answers Question~1.2 negatively~\cite{Xu}. The row-tail example is well-filtered and also satisfies every finite product equality considered here, so these equalities do not imply sobriety even within the well-filtered class.

The additional compactness properties distinguish the two examples as follows:
\begin{center}
\begin{tabular}{@{}lcc@{}}
\toprule
Property & $\Sigma\RR$ & $\Sigma\HH$\\
\midrule
Well-filtered & Yes & Yes\\
Coherent & No & Yes\\
Scott product equality for the square & Yes & No\\
Scott sober & No & Not determined here\\
\bottomrule
\end{tabular}
\end{center}
The compactness properties in the table follow from Theorem~\ref{fan:wellfiltered} and Proposition~\ref{word:wellfiltered}. The sobriety of $\Sigma\HH$ remains open. More generally, it remains open whether every sober countable meet-continuous dcpo satisfies
\[
 \sigma(P\times P)=\sigma(P)\times\sigma(P).
\]
For countable meet-continuous dcpos with local binary joins, the product equality~\cite[Corollary~4.2]{Xu} and Corollary~\ref{local:countable} give both properties. The five-element example following that corollary shows that this class properly contains the class of countable meet-continuous complete lattices.

The two failures are visible in the relations of Section~\ref{sec:relations}. In $\RR$, put
\[
 S=\{(0,0)\}\cup R_1(0),~
 T_n=\{(0,1)\}\cup R_1(n)~(n\in\N),~
 T=\{(0,1)\}.
\]
Then $(T_n)_{n\in\N}$ is increasing in the reverse-inclusion order, and
\[
 \sup_{n\in\N}T_n=\bigcap_{n\in\N}T_n=T.
\]
For each $n$, the elements $S$ and $T_n$ have a common singleton upper bound, whereas $S$ and $T$ are disjoint and have no common upper bound. Taking the irreducible closed set $A=\RR$, the lower section ${\downarrow}({\uparrow} S\cap A)$ contains every $T_n$ and omits their supremum $T$. In the word example, the bad Scott open set is exactly $\Omega=\mathrm{LB}_{\mathcal O}$. Thus one example fails ordinary rectangle openness, while the other fails directed-limit stability of common upper bounds.

Core-compactness gives a complementary positive boundary. He, Dong and Wang proved that every countable dcpo with core-compact Scott space is quasicontinuous and hence Scott sober~\cite[Theorem~3.3]{HDW}. Corollary~\ref{local:corecompact} instead applies without a cardinality restriction, under meet-continuity and the $L$-dcpo hypothesis. Neither $\Sigma\RR$ nor $\Sigma\HH$ is core-compact: for $\RR$, this follows from non-sobriety and~\cite[Theorem~3.3]{HDW}; for $\HH$, it follows from the failure of the square product equality and~\cite[Theorem~II-4.13]{GHK}. Thus the two counterexamples leave Jia's general core-compactness question open, while Corollaries~\ref{local:corecompact} and~\ref{local:functionspace} resolve its two parts for $L$-dcpos.





\end{document}